\documentclass[reqno, 11pt]{amsart}
\usepackage[x11names]{xcolor}
\usepackage{amsmath, amssymb}
\usepackage{amsthm}
\usepackage{mathtools}
\usepackage{hyperref}
\usepackage[shortlabels]{enumitem}
\usepackage[portrait, margin=1in]{geometry}

\hypersetup{
    colorlinks = true,
    linktoc = all,
    urlcolor = Blue3!50!Blue4,
    linkcolor = Blue3!50!Blue4,
    citecolor = Blue3!50!Blue4
}

\usepackage{graphicx}
\usepackage[font = small]{caption}

\newtheorem{theorem}{Theorem}[section]
\newtheorem{corollary}[theorem]{Corollary}
\newtheorem{lemma}[theorem]{Lemma}

\newtheorem*{theorem*}{Theorem}
\newtheorem*{corollary*}{Corollary}
\newtheorem*{lemma*}{Lemma}
\newtheorem*{prop*}{Proposition}
\newtheorem*{fact*}{Fact}
\newtheorem*{claim*}{Claim}
\newtheorem*{conjecture*}{Conjecture}

\theoremstyle{definition}
\newtheorem{example}[theorem]{Example}
\newtheorem{definition}[theorem]{Definition}
\newtheorem{remark}[theorem]{Remark}
\newtheorem*{example*}{Example}
\newtheorem*{definition*}{Definition}
\newtheorem*{remark*}{Remark}

\numberwithin{equation}{section}

\newcommand{\tsf}[1]{\mathsf{#1}}

\newcommand{\floor}[1]{\left\lfloor #1 \right\rfloor}
\newcommand{\ceil}[1]{\left\lceil #1 \right\rceil}

\newcommand{\sdbold}[1]{\textbf{\textsf{#1}}}

\newcommand{\abs}[1]{\left\lvert #1 \right\rvert}
\newcommand{\sabs}[1]{\lvert #1 \rvert}

\newcommand{\cC}{\mathcal C}

\newcommand{\cI}{\mathcal I}
\newcommand{\cJ}{\mathcal J}

\newcommand{\ol}{\overline}

\newcommand{\case}[2]{\textcolor{MediumPurple3!80!MediumPurple4}{\sdbold{Case #1} (#2).}}
\newcommand{\casetext}[1]{\textcolor{MediumPurple3!80!MediumPurple4}{\sdbold{#1}}}
\newcommand{\vocab}[1]{\emph{#1}}

\usepackage{tikz}
\usetikzlibrary{trees}
\usetikzlibrary{arrows.meta}
\usetikzlibrary{decorations.pathmorphing}
\tikzset{snaked/.style = {decorate, decoration=snake}}
\usetikzlibrary{positioning}
\usetikzlibrary{decorations.pathreplacing}
\tikzset{sdot/.style = {fill, circle, inner sep = 1.5pt}}
\usetikzlibrary{calc}
\tikzset{every path/.style=thick}

\title{A proof of the Murty--Simon conjecture on diameter-$2$-critical graphs}

\author{Sanjana Das}
\address{Department of Mathematics, Stanford University, Stanford, CA 94305}
\email{sanjanad@stanford.edu}

\author{Jacob Fox}
\address{Department of Mathematics, Stanford University, Stanford, CA 94305}
\email{jacobfox@stanford.edu}
\thanks{JF is supported by NSF award DMS-2452737.}

\date{September 28, 2026}

\begin{document}

\begin{abstract}
    The Murty--Simon conjecture states that every $n$-vertex diameter-$2$-critical graph has at most $\floor{n^2/4}$ edges, with equality only for the balanced complete bipartite graph. Using regularity methods, F\"uredi proved this for all sufficiently large $n$ (specifically, $n$ greater than a tower of $2$'s of height roughly $10^{14}$). We present a proof of the Murty--Simon conjecture that works for all $n$. A proof of the $\floor{n^2/4}$ bound was originally found by AI; we simplified this proof and extended its ideas to get a proof of the full conjecture. 
\end{abstract}

\maketitle
\section{Introduction}

Mantel's theorem \cite{Man07} from 1907 says that every triangle-free graph on $n$ vertices has at most $\lfloor n^2/4 \rfloor$ edges, with equality holding uniquely for the balanced complete bipartite graph $K_{\ceil{n/2}, \floor{n/2}}$. It is considered the first result in extremal graph theory and has been extremely influential. 

In the 1960s and 1970s, several mathematicians proposed a possible strengthening of Mantel's theorem. Instead of working with the class of triangle-free graphs (and looking for the maximum possible number of edges), they worked with the class of \emph{diameter-$2$-critical} graphs --- graphs where every pair of vertices has distance at most $2$, but which lose this property if we delete any edge. 

A short argument shows that every \emph{maximal} triangle-free graph (i.e., a triangle-free graph where adding any new edge would create a triangle) is diameter-$2$-critical. Meanwhile, there are graphs which are diameter-$2$-critical but contain triangles --- for an example, see Figure \ref{fig:diam-2-crit-triangle}. It was conjectured that the conclusion of Mantel's theorem holds even for the class of diameter-$2$-critical graphs --- that the maximum number of edges of an $n$-vertex diameter-$2$-critical graph is still $\floor{n^2/4}$, and the balanced complete bipartite graph $K_{\ceil{n/2}, \floor{n/2}}$ is still the only equality case. In particular, this conjecture would imply Mantel's theorem. 

\begin{figure}[ht]
    \centering
    \begin{tikzpicture}
        \foreach \i in {1, 2, 3, 4, 5, 6} {
            \node [sdot] (\i) at (60*\i + 30:2) {};
            \node at (60*\i + 30:2.35) {$v_\i$};
        }
        \node [sdot, label = above: {$v_7$}] (7) at (0, 0) {};
        \draw [MediumPurple3] (1) -- (3) -- (5) -- (1);
        \foreach \i\j in {1/2, 3/4, 5/6} {
            \draw [MediumPurple3] (\i) -- (\j);
            \draw [MediumPurple3] (7) -- (\j);
        }
    \end{tikzpicture}
    \caption{A diameter-$2$-critical graph containing a triangle $v_1v_3v_5$: Deleting the edge $v_1v_3$ would destroy the only length-$2$ path between $v_2$ and $v_3$ (and likewise with the edges $v_3v_5$ and $v_5v_1$). The other edges are not in triangles, so if we deleted one, there would be no length-$2$ path between its endpoints.} \label{fig:diam-2-crit-triangle}
\end{figure}

This conjecture has a long history: According to Erd\H{o}s (private communication with F\"uredi --- see \cite{Fur92}), it goes back to work of Ore from the 1960s. In 1975, Plesn\'ik \cite{Ple75} conjectured that any $n$-vertex diameter-$2$-critical graph has at most $\floor{n^2/4}$ edges. This was then independently conjectured by Murty and Simon (see \cite{CH79}), who also conjectured that the balanced complete bipartite graph is the only equality case. This conjecture is now known in the literature as the Murty--Simon conjecture.

In the 1970s and 1980s, several authors made progress towards the Murty--Simon conjecture, proving upper bounds (on the number of edges in an $n$-vertex diameter-$2$-critical graph) of the form $cn^2$ for values of $c$ getting closer and closer to $1/4$ \cite{Ple75, CH79, Fan87}. Then in 1992, in a major breakthrough, F\"uredi \cite{Fur92} proved that the Murty--Simon conjecture holds for all sufficiently large $n$. His proof combined regularity methods (specifically, the triangle removal lemma or the $(6, 3)$-theorem, which are consequences of Szemer\'edi's regularity lemma proven by Ruzsa and Szemer\'edi in \cite{RS78}) and stability-type arguments. 

Interestingly, F\"uredi's proof was the first application of Szemer\'edi's regularity lemma to obtain an \emph{exact} answer  \cite{Fur94}. However, because it uses the regularity lemma, the quantitative bound it gets (specifically, the lower bound on how large $n$ has to be for the proof to work) is enormous --- F\"uredi \cite{Fur92} computed that $n$ has to be at least a tower of $2$'s of height roughly $10^{14}$. (Today, we know slightly better bounds for the triangle removal lemma than the ones coming from Szemer\'edi's regularity lemma --- see \cite{Fox11} --- but these better bounds still involve towers of $2$'s.)

Because the lower bound on $n$ needed by F\"uredi's proof is so enormous, there is still interest in understanding the Murty--Simon conjecture for smaller values of $n$. There is a long line of work proving the Murty--Simon conjecture restricted to various classes of graphs; see \cite{HHMY15} for a nice survey of many of these results. For example, the Murty--Simon conjecture has been proven for graphs with certain forbidden induced subgraphs \cite{HW03, HHY11, HH12a, HH12b, HH14}; graphs with large maximum degree \cite{HHMY14, BJFSSB18}; graphs whose complements have vertex-connectivity $1$, $2$, or $3$ \cite{HHY13, WWY13}; and graphs whose complements have diameter $3$ \cite{HHMY11, Wan12}. There is also work proving stronger bounds for certain restricted classes of graphs \cite{DFH19, LW25}. One possible approach to proving the Murty--Simon conjecture was through a strengthening proposed by Caccetta and H\"aggkvist \cite{CH79}, who conjectured that every diameter-$2$-critical graph has average \emph{edge-degree} at most its number of vertices; however, this conjecture was disproved by Loh and Ma \cite{LM16}.

In this paper, we present a full proof of the Murty--Simon conjecture.

\begin{theorem} \label{thm:murty-simon}
    For all $n \geq 3$, any $n$-vertex diameter-$2$-critical graph has at most $\floor{n^2/4}$ edges. Furthermore, the only graph achieving equality is the balanced complete bipartite graph $K_{\ceil{n/2}, \floor{n/2}}$.
\end{theorem}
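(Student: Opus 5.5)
The plan is to replace Mantel's degree-sum argument by a weighted counting argument that uses only the local structure forced by criticality. First I record that structure. If $uv$ is an edge of a diameter-$2$-critical graph $G$, then $G-uv$ has diameter greater than $2$. So either $uv$ lies in no triangle, or there is a \emph{witness} $w$ such that, up to swapping $u$ and $v$, we have $w\sim v$, $w\not\sim u$, and $v$ is the unique common neighbour of $u$ and $w$. Call the first kind of edge \emph{free} and the second kind \emph{triangular}, and write $d_T(u)$ for the number of triangular edges at $u$ for which $u$ plays the role of the non-witness endpoint.

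The first step is an injection: for fixed $u$, distinct neighbours $v$ as above receive distinct witnesses $w$, since $v$ is the unique common neighbour of $u$ and $w$. These witnesses are non-neighbours of $u$, so $d(u)+d_T(u)\le n-1$. For a free edge $uv$ we have $N(u)\cap N(v)=\emptyset$, hence $d(u)+d(v)\le n$. Mantel's proof would finish from $\sum_{uv\in E}(d(u)+d(v))=\sum_u d(u)^2\ge 4e^2/n$ if we had $d(u)+d(v)\le n$ for every edge.

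For triangular edges this bound can fail, and by Loh--Ma even its average version fails. So I will not aim for the edge-degree inequality. Instead I will assign each edge a modified weight, for instance $d(u)+d(v)$ minus a correction counting witnessed pairs, and discharge along witness paths $u$--$v$--$w$. The goal is an inequality of the form
\[
\sum_u d(u)^2 \le ne + \Phi,
\]
where $\Phi$ is controlled by the non-adjacent pairs produced by the injection, and then to show $\Phi\le n^2/4\cdot(\text{slack})$ so that $e\le n^2/4$ still follows. The natural way to organise this is to count, for each vertex, pairs consisting of a neighbour and a non-neighbour. That is, I would use $\sum_u d(u)\bigl(n-1-d(u)\bigr)$, which the witness map bounds below by a quantity involving triangular edges. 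I would then combine this with the Cauchy--Schwarz bound on the free part.

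The main obstacle is exactly this charging step for triangular edges. A single vertex $v$ can serve as the middle vertex of many witness paths, and dense triangular substructures such as the one in Figure~\ref{fig:diam-2-crit-triangle} are where the naive averaging breaks down. My first attempt would be to show that the set of vertices incident to triangular edges spans few edges relative to the non-edges it forces. I would do this by pairing each triangle $uvx$ with the witnessed non-edges $uw$ and $xw'$. As a fallback, I would try induction on $n$ by deleting a minimum-degree vertex $z$ with $d(z)\le n/2$, repairing criticality by removing redundant edges in $G-z$.

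Finally, for the equality case, I would trace equality through every inequality used. This should force $G$ to be regular, to have no triangular edges (so $G$ is triangle-free), and to give equality in Mantel's bound. These conditions yield $K_{\lceil n/2\rceil,\lfloor n/2\rfloor}$.
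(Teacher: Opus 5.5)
There is a genuine gap: the argument that would actually bound $e$ is never given. Your preliminary observations are correct. A witness $w$ for $uv$ has $v$ as the unique common neighbour of $u$ and $w$, so witnesses are distinct non-neighbours of $u$ and $d(u)+d_T(u)\le n-1$; free edges satisfy $d(u)+d(v)\le n$. But the central inequality $\sum_u d(u)^2\le ne+\Phi$ is stated with $\Phi$ undefined. The charging step for triangular edges, which you yourself call the main obstacle, is left as a list of things to try.

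The framework also cannot close as described. Loh--Ma give diameter-$2$-critical graphs with $\sum_u d(u)^2>ne$, so $\Phi$ must sometimes be positive. Write $\sum_u d(u)^2=4e^2/n+V$ with $V=\sum_u(d(u)-2e/n)^2$. Your chain then yields $e\le n^2/4$ only if you prove essentially $\Phi\le V$: the excess caused by triangular edges must be paid for by irregularity of the degree sequence. An absolute bound of the form ``$\Phi\le \frac{n^2}{4}\cdot(\text{slack})$'' is meaningless without such a comparison. Nothing you have proved points that way. The vertex bound $d(u)\le n-1-d_T(u)$ only gives $\sum_u d(u)^2\le 2(n-1)e-\sum_u d(u)d_T(u)$, which is nowhere near $ne$ (for triangle-free graphs $d_T\equiv 0$). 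Combining it edge-by-edge with the free-edge bound is exactly the step left undone.

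The fallback induction fails too. Deleting a vertex can destroy diameter $2$ (e.g.\ $C_5$ minus a vertex is $P_4$), and deleting further edges only increases distances. So no diameter-$2$-critical graph on $n-1$ vertices is produced. Your equality analysis, which traces equality through these inequalities, inherits all of this.

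The paper's proof avoids degrees altogether. It fixes a maximal matching $M$ and proves $\abs{E\setminus M}\le\abs{\ol{E}}$, which immediately gives $2e\le\binom{n}{2}+\floor{n/2}=2\floor{n^2/4}$. The key device is the bipartite incidence graph $\cI'$ between edges not in $M$ and critical non-edges; each critical non-edge is joined to the one or two non-matching edges of its critical $2$-path. A component of $\cI'$ has at most one more edge than critical non-edge, with equality only for trees whose leaves are all edges. Each such left-lopsided component is charged, via ``switch exits'' (replace an endpoint of one of its members by its $M$-partner), with weight at least $2$ to noncritical non-edges, each of which receives weight at most $2$. For equality, the paper re-chooses $M$ to show that an equality case has no critical non-edges unless it is a star. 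Such a graph is triangle-free, and Mantel's theorem finishes.

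This exact accounting of edges against non-edges, which is insensitive to triangles, is the missing idea. The Cauchy--Schwarz route, by contrast, would have to recover the Loh--Ma excess from degree irregularity, and no mechanism for that is offered.
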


We will deduce Theorem \ref{thm:murty-simon} from the following two lemmas (which handle the $\floor{n^2/4}$ bound and the equality case, respectively). 

\begin{lemma} \label{lem:bound}
    Let $G$ be a diameter-$2$-critical graph on $n \geq 3$ vertices, and let $M$ be any maximal matching of $G$. Then \[\abs{E(G) \setminus M} \leq \sabs{\ol{E(G)}}.\] 
\end{lemma}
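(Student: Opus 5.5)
The plan is to build a charging scheme in which every edge of $E(G)\setminus M$ sends total weight $1$ to non-edges of $G$, and every non-edge receives total weight at most $1$. The starting point is the local characterization of criticality. For an edge $uv$, deleting it must create a pair at distance at least $3$. Since $G$ has diameter $2$, the only pairs whose distance can change are $\{u,v\}$ itself and pairs $\{u,w\}$ or $\{v,w\}$ with $w\notin\{u,v\}$. So either $uv$ lies in no triangle, or there is a vertex $w$ with $vw\notin E(G)$ whose \emph{unique} common neighbour with $v$ is $u$ (or the same with $u,v$ swapped). In the second case I call the non-edge $vw$ a \emph{witness} for $uv$. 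A non-edge $vw$ with unique common neighbour $u$ can be a witness only for the two edges $uv$ and $uw$. So witnesses give an at-most-$2$-to-$1$ map from triangle edges to non-edges, and each collision has the rigid shape of an induced path $v\,u\,w$ whose middle vertex is the unique common neighbour of its ends.

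For edges $uv$ lying in no triangle, I would use the Mantel-type estimate: $N(u)\cap N(v)=\emptyset$, and every $x\neq u,v$ is non-adjacent to at least one of $u,v$. This gives roughly $n-2$ non-edges incident to $\{u,v\}$, and I would distribute the charge of $uv$ over them. Here $M$ enters. Since $M$ is a maximal matching, every edge of $G$ meets $V(M)$, so every edge of $E(G)\setminus M$ can be assigned to an endpoint lying in a matched pair $\{a,b\}\in M$. I would then charge pair by pair. For a matched pair $ab$, count the edges of $E(G)\setminus M$ assigned to $a$ or $b$ against the non-edges from $\{a,b\}$ to the rest of the graph, together with witness non-edges. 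The edge $ab$ itself is excused, and this removed edge is exactly the slack that absorbs the ``$+1$'' lost when $d(a)+d(b)$ is compared with $n$.

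The key steps, in order, are as follows.
\begin{enumerate}[(i)]
\item Establish the witness lemma above and the $\le 2$-to-$1$ structure of witnesses.
\item Fix $M$ and an assignment of non-matching edges to matched endpoints, chosen to be consistent, for example by ordering the matched pairs.
\item For each matched pair $ab$, show that the number of charged edges at $\{a,b\}$ is at most the number of non-edges newly ``owned'' by $\{a,b\}$ plus half-shares of witness non-edges.
\item Sum over $M$ and over unmatched vertices, which form an independent set, so all their edges go to matched vertices, giving $\abs{E(G)\setminus M}\le\sabs{\ol{E(G)}}$.
\end{enumerate}

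I expect the main obstacle to be step (iii), specifically resolving the $2$-to-$1$ collisions: a non-edge $vw$ with unique common neighbour $u$ that is demanded by both $uv$ and $uw$. My first attempt would be to route the charge of one of the two edges elsewhere using the matching. If $uv\in M$ or $uw\in M$, that edge needs no charge. Otherwise $u$'s matching partner $u'$ is adjacent to at most one of $v,w$, because $u$ is their only common neighbour. So $u'v$ or $u'w$ is a non-edge that can absorb the second unit of charge. The delicate part is checking that these rerouted non-edges are not themselves oversubscribed. I would handle this with a careful case analysis on whether $v,w$ are matched and to whom, possibly using fractional $1/2$-charges to symmetrize.
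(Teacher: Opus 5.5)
\paragraph{Verdict.} Steps (i), (ii) and (iv) are fine. The gap is step (iii), which is the entire content of the lemma. The proposal leaves it to ``a careful case analysis'' and gives no mechanism that could control the counts.

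\paragraph{Budgets and the unit of accounting.} Your budgets (the Mantel-type count at triangle-free edges, the pair-by-pair count against $M$, and the witness shares) all draw on the same non-edges, and nothing says which deficit pays for which overflow. The pair-by-pair count alone does prove the triangle-free case: there are at most two edges between two matched pairs, and at most one between a matched pair and an unmatched vertex. In general it can overflow only at triangles through $M$-edges. Every witness you would use to cover such an overflow is itself a non-edge that this count has already used.

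More fundamentally, a single collision is the wrong unit of accounting. Let $\mathcal{I}'$ be the bipartite graph with $E(G)\setminus M$ on one side, the critical non-edges on the other, and $e\sim f$ when $e$ lies on the unique $2$-path of $f$. Every critical non-edge has degree at most $2$ there. Hence each connected component contains at most one more edge than critical non-edges, with equality only for trees all of whose leaves are edges (an edge on no critical $2$-path is such a tree by itself). So collisions inside a component are paid for by that component's own critical non-edges. What you must find is \emph{one} extra noncritical non-edge per tree component, not one per collision. Without organizing the witnesses this way, you have nothing to bound the number of rerouted charges against.

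\paragraph{The rerouting step.} Your observation that $u$'s partner $u'$ sees at most one of $v,w$ is exactly Case 1 of the paper's leaf argument, but it only covers $u\in V(M)$.
\begin{itemize}
\item If $u\notin V(M)$, maximality only gives $v,w\in V(M)$. You must then switch at $v$, using $v^*u$ if it is a non-edge and otherwise $v^*w$ (a non-edge because $vw$ is critical). But $v^*w$ is noncritical only for a global reason. In the paper, $uw$ is chosen as a leaf of a tree component, and a critical $v^*w$ would have $2$-path $v^*\,u\,w$, giving that leaf a second neighbor.
\item Even when $u$ is matched, nothing in your scheme stops $u'v$ from being already spent (as a witness or in the pair-by-pair budget). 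Nothing stops it from receiving rerouted charges from several collisions either.
\end{itemize}

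\paragraph{How the paper settles these points.}
\begin{itemize}
\item If switching one endpoint of an \emph{edge} of a tree component along $M$ yields a non-edge, that non-edge is automatically noncritical, since a critical one would be a leaf of the tree on the non-edge side.
\item Each noncritical non-edge $xy$ arises as a switch exit in at most two ways, $(x,y)$ and $(y,x)$, each for a single component. It gets double weight when the unswitched endpoint is unmatched.
\item An injectivity argument over the leaves of each tree (or the two endpoints of a single-edge component) gives every tree component either two distinct exits or one whose unswitched endpoint is unmatched.
\end{itemize}
Some version of this global bookkeeping is what your step (iii) is missing.
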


(We write $E(G)$ and $\ol{E(G)}$ for the sets of edges and non-edges of $G$, respectively. By a \emph{maximal matching}, we mean a matching that we cannot add any additional edge to while keeping it a matching --- i.e., a matching $M$ such that $M \cup \{e\}$ is \emph{not} a matching for each $e \in E(G) \setminus M$.)

\begin{lemma} \label{lem:equality}
    Let $G$ be a diameter-$2$-critical graph on $n \geq 3$ vertices, and suppose that for \emph{every} maximal matching $M$ of $G$, we have \[\abs{E(G) \setminus M} = \sabs{\ol{E(G)}}.\] Then $G$ is triangle-free. 
\end{lemma}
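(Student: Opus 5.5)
The plan is to get the conclusion of Lemma~\ref{lem:equality} out of the \emph{proof} of Lemma~\ref{lem:bound}, not just its statement. I expect that proof to build, for each maximal matching $M$, an injection $\varphi_M \colon E(G)\setminus M \to \ol{E(G)}$. The natural shape is this: by diameter-$2$-criticality, each edge $uv$ has a \emph{witness} pair $\{a,b\}$ (with $a=u$, say) such that $ab$ is a non-edge and $uv$ lies on every path of length at most $2$ from $a$ to $b$. The matching $M$ is then used to resolve collisions, where two edges would be charged to the same non-edge. Under the hypothesis of Lemma~\ref{lem:equality}, every $\varphi_M$ is a bijection. So the goal is: assuming a triangle $xyz$ exists, exhibit one maximal matching $M$ and one non-edge outside the image of $\varphi_M$ (or, equivalently, one slack step in the counting).

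The first step is a cheap consequence of the hypothesis. Since $\abs{E(G)\setminus M} = \abs{E(G)}-\abs{M}$ and $\sabs{\ol{E(G)}}$ does not depend on $M$, every maximal matching of $G$ has the same size, so $G$ is equimatchable. I will use this to transfer information between matchings. Given the triangle $xyz$, I will compare a maximal matching $M_1$ containing $xy$ with one obtained by swapping $xy$ for $xz$ (or $yz$) and re-extending greedily. Equimatchability forces such swaps to be size-neutral, which strongly constrains the neighbourhoods of $x,y,z$ outside the matched vertices.

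The second step is to look at the witnesses of the triangle edges. A witness of $xy$ cannot be the pair $\{x,y\}$ itself, so it has the form $\{x,b\}$ with $b \sim y$, $b\not\sim x$, and $y$ the unique common neighbour of $x$ and $b$ (or the symmetric form with $x$ and $y$ exchanged). In particular $b\not\sim z$, since otherwise $z$ would be a second common neighbour of $x$ and $b$. So each triangle edge produces a pendant-like vertex (as with $v_2,v_4,v_6$ in Figure~\ref{fig:diam-2-crit-triangle}), and the non-edges $xb$, $zb$ are both present. I would then choose $M$ to contain $yb$ together with the analogous edges for the other triangle edges, and extend it to a maximal matching. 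This should leave the triangle edges unmatched, so all three of them must be charged by $\varphi_M$. The aim is to show that the non-edges among $\{x,y,z\}\times\{\text{witness vertices}\}$ are more numerous than the edges competing for them. Concretely, $zb$ should be a non-edge that no edge of $E(G)\setminus M$ can be mapped to, and this breaks bijectivity.

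The main obstacle is the last step. Whether some non-edge is missed depends on the precise charging rule in the proof of Lemma~\ref{lem:bound}, and in particular on how collisions are resolved. If the rule is too flexible, the ``extra'' non-edge may be absorbed by some distant edge. If that happens, I would first try varying $M$ over the two or three natural choices around the triangle, and summing the resulting equalities to obtain a contradiction. Failing that, I would strengthen Lemma~\ref{lem:bound} locally: show that each triangle contributes strict slack to the injection for a suitably chosen $M$.
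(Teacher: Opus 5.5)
\textbf{There is a genuine gap.} Your proposal has no argument for its decisive step, which is to exhibit a maximal matching $M$ for which equality in Lemma~\ref{lem:bound} fails. You state this as what ``should'' happen under a charging rule you have not pinned down, and you flag it yourself as the main obstacle.

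\textbf{The guessed mechanism is wrong.} Your local observations about a triangle edge and its witness $b$ are correct, including that $xb$ and $zb$ are non-edges. But the injection you are guessing at cannot be the mechanism behind Lemma~\ref{lem:bound}. An edge lying on no critical $2$-path has no witness that is a non-edge, since deleting it only separates its own endpoints. In $K_{n_1,n_2}$ with $n_1,n_2\ge 2$ there are no critical non-edges at all. Such edges are exactly why the proof needs the maximal matching and switch exits (Lemma~\ref{lem:single-vtx-exits}).

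\textbf{What equality actually gives you.} The proof is a double count, not an injection. It compares the two sides of each component of $\cI'$, then pays for left-lopsided components with noncritical non-edges via switch exits in $\cJ$. So the usable content of your hypothesis is Corollary~\ref{cor:bound}: for every maximal $M$, $\cI'$ has no right-lopsided component, and no balanced component of $\cI'$ has a switch exit.

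\textbf{Why $zb$ is not a missed non-edge.} With $yb\in M$, the non-edge $xb$ is a right leaf of the $\cI'$-component of $xy$, so that component is not left-lopsided. Since $y$ is a common neighbour of $z$ and $b$, there are two possibilities.
\begin{enumerate}[(a)]
\item $zb$ is critical with path $(z,y,b)$. Then it is a right vertex of $\cI$, hanging as a leaf off $yz$ in $\cI'$.
\item $zb$ is noncritical. Then $(b,z)$ is a switch exit of the $\cI'$-component of $yz$, which is a legitimate charge if that component is left-lopsided.
\end{enumerate}
A contradiction appears only in special configurations, for example when $yz$ lies in the same $\cI'$-component as $xy$. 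Nothing in your triangle analysis forces such a configuration. Equimatchability is a correct consequence, but you never turn it into a concrete constraint. It is also weak for odd $n$: every odd-order graph with independence number $2$ is equimatchable.

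\textbf{The missing idea} is to set the triangle aside and use the global structure of $\cI$.
\begin{enumerate}[(1)]
\item Take $M$ to be a greedy extension of one or two prescribed edges. Corollary~\ref{cor:bound} then forces every component of $\cI$ to be left-lopsided and to contain at most one edge of any matching of $G$ (Lemma~\ref{lem:all-comps-left-lops}).
\item A component with a right vertex is therefore a tree with all leaves on the left. So it has a leaf $uy$ whose neighbour $xy$ has critical path $(x,u,y)$.
\item Split on the neighbourhood of $x$.
\begin{itemize}
\item If $x$ has a neighbour $v\ne u$ with $uv\notin E(G)$: when $uv$ is critical, the disjoint edges $uy,xv$ lie in one component, contradicting (1). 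Otherwise, taking $M\supseteq\{uy,xv\}$ gives a balanced component with switch exit $(v,u)$.
\item If $x$ and $u$ have a common neighbour $v$: when $yv$ is critical, $yv$ is a second neighbour of the leaf $uy$, a contradiction. Otherwise, taking $M\supseteq\{xv,uy\}$ gives a balanced component with switch exit $(v,y)$.
\item If $x$ has no neighbour besides $u$, then $G$ is a star.
\end{itemize}
\end{enumerate}
Hence an equality case is a star or has no critical non-edges at all. Triangle-freeness then follows from Lemma~\ref{lem:triangle}, so the triangle enters only at this final step.
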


\begin{proof}[Proof of Theorem \ref{thm:murty-simon} assuming Lemmas \ref{lem:bound} and \ref{lem:equality}] 
    Let $G$ be a diameter-$2$-critical graph on $n \geq 3$ vertices, and let $M$ be a maximal matching of $G$ with $m$ edges. Then Lemma \ref{lem:bound} implies that 
    \begin{equation} 
        2\abs{E(G)} \leq \binom{n}{2} + m \leq \binom{n}{2} + \floor{\frac{n}{2}} = 2\floor{\frac{n^2}{4}}. \label{eqn:matching-half-bound}
    \end{equation} 
Dividing by two, we obtain $\sabs{E(G)} \leq \floor{n^2/4}$. 
    
    Furthermore, if equality holds, then we must have equality in \eqref{eqn:matching-half-bound} for \emph{every} maximal matching $M$ of $G$, meaning that \[\abs{E(G) \setminus M} = \sabs{\ol{E(G)}}.\] (We must also have $m = \floor{n/2}$, but we will not use this in our proof.) Then Lemma \ref{lem:equality} implies that $G$ is triangle-free. And by Mantel's theorem, any triangle-free $n$-vertex graph has at most $\floor{n^2/4}$ edges, with equality only achieved by the balanced complete bipartite graph $K_{\ceil{n/2}, \floor{n/2}}$; this gives the desired result. 
\end{proof}

In Section \ref{sec:bound}, we prove the first half of Theorem \ref{thm:murty-simon}, that any $n$-vertex diameter-$2$-critical graph has at most $\floor{n^2/4}$ edges. In Section \ref{sec:equality}, we prove the second half of Theorem \ref{thm:murty-simon}, the characterization of the equality cases.  

\subsection*{AI Statement} This proof was mostly found by AI, specifically GPT-5.6 Sol on August 11, 2026. It found a full proof of the $\floor{n^2/4}$ bound; we simplified some parts of this proof and cleaned up the presentation to get the argument in this paper. For even $n$, it also found a proof that the only equality case is $K_{n/2, n/2}$, based on the fact (proved by Sumner \cite{Sum79}) that the only connected graphs for which every maximal matching is perfect are $K_n$ and $K_{n/2, n/2}$. This fact does not extend nicely to odd $n$ --- there are many examples of graphs for which every maximal matching has $n - 1$ vertices (e.g., any graph with independence number $2$ and an odd number of vertices) --- so we were not able to adapt its proof to handle the equality cases for odd $n$. Instead, we came up with a different proof handling the equality cases which works for all $n$ (which we present in Section \ref{sec:equality}). We wrote up the argument on our own. Before submitting, we also made a final pass of the manuscript through GPT-6 Astra and made a few minor edits based on its feedback. 

\section{Proof of the \texorpdfstring{$\floor{n^2/4}$}{n2/4} bound} \label{sec:bound}

In this section, we prove Lemma \ref{lem:bound}, which implies the first part of the Murty--Simon conjecture (that any $n$-vertex diameter-$2$-critical graph has at most $\floor{n^2/4}$ edges). 

Throughout this section, we use the setup of Lemma \ref{lem:bound} --- we suppose that we are given a diameter-$2$-critical graph $G$ and a maximal matching $M$ of $G$, and we want to show that \[\abs{E(G) \setminus M} \leq \sabs{\ol{E(G)}}.\] 

\subsection{Setup and preliminaries}

First, we define a few terms and objects which we will use in our proof, and prove a few simple claims.

We say a non-edge of $G$ is \emph{critical} if there is only one $2$-path (i.e., a path with $2$ edges) between its endpoints, and we refer to this $2$-path as a \emph{critical $2$-path}. 

\begin{example}
    Some examples of critical and noncritical non-edges: 
    \begin{itemize}
        \item In $K_{n_1, n_2}$ for any $n_1, n_2 \geq 2$, every non-edge is noncritical. 
        \item In the example in Figure \ref{fig:diam-2-crit-triangle}, every non-edge is critical (for example, the non-edge $v_2v_3$ has only the path $(v_2, v_1, v_3)$, the non-edge $v_1v_7$ has only the path $(v_1, v_2, v_7)$, the non-edge $v_2v_4$ has only the path $(v_2, v_7, v_4)$, and so on). 
    \end{itemize}
\end{example}

We will use the following observation in a few places.

\begin{lemma} \label{lem:triangle}
    Suppose that $G$ is diameter-$2$-critical. Then every edge of $G$ in a triangle is also on some critical $2$-path. 
\end{lemma}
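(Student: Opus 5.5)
The plan is to use criticality of the edge directly. Let $uv$ be an edge of $G$ lying in a triangle. Because $G$ is diameter-$2$-critical, the graph $G - uv$ does not have diameter at most $2$. So there are vertices $x, y$ with $\mathrm{dist}_{G-uv}(x,y) \geq 3$, while $\mathrm{dist}_G(x,y) \leq 2$. Every path of length at most $2$ between $x$ and $y$ in $G$ must therefore use the edge $uv$.

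We first rule out $\{x,y\} = \{u,v\}$. Since $uv$ lies in a triangle $uvw$, the path $(u,w,v)$ survives in $G - uv$, so $u$ and $v$ are at distance $2$ there. Hence $\{x,y\} \neq \{u,v\}$. Then $x$ and $y$ cannot be adjacent in $G$: otherwise the edge $xy \neq uv$ would survive. So $xy$ is a non-edge, and every $2$-path between $x$ and $y$ uses the edge $uv$.

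Next we show there is exactly one such $2$-path and that it contains $uv$. A $2$-path $(x,z,y)$ contains $uv$ only if $\{x,z\} = \{u,v\}$ or $\{z,y\} = \{u,v\}$. Without loss of generality $x = u$. The $2$-paths through $uv$ then have middle vertex $v$, so the only candidate is $(u,v,y)$; the alternative $x=v$ is symmetric. If $x \notin \{u,v\}$, then $y$ must be in $\{u,v\}$, and we argue symmetrically. It remains to check that both $x$ and $y$ cannot lie in $\{u,v\}$, which was already excluded.

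Thus the $2$-paths between $x$ and $y$ consist of exactly one path, namely $(u,v,y)$ or the symmetric variant, and at least one exists because $\mathrm{dist}_G(x,y) = 2$. So $xy$ is a critical non-edge, and its critical $2$-path contains $uv$.

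There is no real obstacle here. The only delicate point is excluding the case where the destroyed pair is $\{u,v\}$ itself, and this is exactly where the triangle hypothesis is used.
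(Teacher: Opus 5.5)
Your proof is correct and follows the same route as the paper: delete the edge, use the third vertex of the triangle to show the endpoints themselves stay at distance $2$, and conclude that the pair pushed beyond distance $2$ must be a critical non-edge whose unique $2$-path uses the deleted edge. Your version simply writes out in detail the step the paper compresses into one sentence, namely that such a pair must be a non-edge whose only $2$-path passes through the deleted edge.
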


For example, in Figure \ref{fig:diam-2-crit-triangle}, the edge $v_1v_3$ is on the critical $2$-path $(v_2, v_1, v_3)$.

\begin{proof}
    Suppose the edge $xy$ is in a triangle. Then if we delete $xy$, there is still a $2$-path between $x$ and $y$ (made by the other two legs of the triangle). So because $G$ is diameter-$2$-critical, there must be some \emph{other} pair of vertices which originally had distance at most $2$ but no longer do after this deletion. The only way this can occur is if that pair is a critical non-edge and $xy$ is on its critical $2$-path. 
\end{proof}

For convenience, we let $E = E(G)$ be the set of edges and $F = \ol{E(G)}$ be the set of non-edges of $G$. We partition $F = F_{\tsf{c}} \cup F_{\tsf{nc}}$, where $F_{\tsf{c}}$ and $F_{\tsf{nc}}$ are the sets of critical and noncritical non-edges, respectively. Given our maximal matching $M$, we also let $E' = E \setminus M$. (So to prove Lemma \ref{lem:bound}, we want to show $\sabs{E'} \leq \abs{F}$.)

We define an auxiliary bipartite graph $\cI$ as follows: We take $\cI$ to have vertex sets $E$ on the left and $F_{\tsf{c}}$ on the right, and we draw an edge $e \sim f$ (for $e \in E$ and $f \in F_{\tsf{c}}$) if $e$ is on the critical $2$-path connecting the endpoints of $f$. We also define \[\cI' = \cI[E', F_{\tsf{c}}]\] to be the graph obtained by deleting $M$ from $\cI$. 

\begin{remark} 
    Note that vertices of $\cI$ are (unordered) \emph{pairs} of vertices in $G$ --- left vertices of $\cI$ are edges of $G$, and right vertices of $\cI$ are critical non-edges of $G$. In particular, $M$ is a subset of the left vertex set of $\cI$ (and $\cI'$ is obtained from $\cI$ by deleting the vertices of $\cI$ which are in $M$). 
\end{remark} 

Note that all vertices on the right side of $\cI$ have degree exactly two --- a critical non-edge has exactly two edges on its critical $2$-path. This means all vertices on the right side of $\cI'$ have degree one or two (the two edges on a critical $2$-path share a vertex, so at most one can be used in $M$). 

Finally, we write $V(M)$ for the set of vertices of $G$ covered by $M$. For $x \in V(M)$, we write $x^*$ for the vertex matched to $x$ in $M$ (meaning that $xx^* \in M$). 

\subsection{Components of \texorpdfstring{$\cI$}{I} and \texorpdfstring{$\cI'$}{Iprime}}

The first step of the proof is to understand the structure of the connected components in $\cI$ and $\cI'$, using the following lemma. (In this section, we will only need to work with components of $\cI'$; but in Section \ref{sec:equality}, we will also work with components of $\cI$.)

\begin{lemma} \label{lem:component-sides}
    Suppose that $C$ is a connected component of either $\cI$ or $\cI'$ with $a$ vertices on the left and $b$ on the right. Then $a \leq b + 1$, with equality if and only if $C$ is a tree whose leaves are all on the left. 
\end{lemma}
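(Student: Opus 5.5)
The plan is to count edges of the component $C$ and compare with the tree bound. The key input is the observation made just before the lemma: every right vertex of $\cI$ has degree exactly two, and every right vertex of $\cI'$ has degree one or two. So in either graph, each right vertex of $C$ has degree at most two. Hence
\[
\abs{E(C)} = \sum_{f \text{ right in } C} \deg_C(f) \leq 2b.
\]
On the other hand, $C$ is connected on $a + b$ vertices, so $\abs{E(C)} \geq a + b - 1$. Combining the two gives $a + b - 1 \leq 2b$, that is, $a \leq b + 1$.

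For the equality case, suppose $a = b + 1$. Then both inequalities are tight. First, $\abs{E(C)} = a + b - 1$, so the connected graph $C$ has exactly (number of vertices minus one) edges and is therefore a tree. Second, every right vertex of $C$ has degree exactly two in $C$, so no right vertex is a leaf. Hence all leaves are on the left.

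Conversely, suppose $C$ is a tree whose leaves all lie on the left. Every right vertex then has degree at least $2$, and also at most $2$, so exactly $2$; thus $\abs{E(C)} = 2b$. Being a tree, $C$ also has $\abs{E(C)} = a + b - 1$. So $a + b - 1 = 2b$, giving $a = b + 1$.

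Two small points need checking. The first is degenerate components, for instance a single isolated left vertex, where $a = 1$ and $b = 0$. This is a tree whose only vertex is on the left. Its "leaves" should be read under the convention that an isolated vertex counts as a leaf, or else the leaf condition is vacuous; either way the equality $a = b + 1$ holds and the argument above goes through. An isolated right vertex cannot occur in $\cI$ (degree $2$) or in $\cI'$ (degree at least $1$). The second is that degrees within $C$ agree with degrees in $\cI$ or $\cI'$, which holds because $C$ is a whole connected component. I do not expect a real obstacle here; the only care needed is in stating the degree bound on the right side uniformly for $\cI$ and $\cI'$.
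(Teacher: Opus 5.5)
Your proof is correct and follows essentially the same route as the paper: the lower bound $a+b-1$ on the number of edges of $C$ from connectivity against the upper bound $2b$ from the right-side degrees, with tightness forcing a tree whose right vertices all have degree exactly two. Your explicit proof of the converse direction and your treatment of the single-left-vertex component are small additions that the paper leaves implicit.
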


\begin{proof}
    On one hand, because $C$ is connected and has $a + b$ vertices, it must have at least $a + b - 1$ edges. On the other hand, all vertices on its right side have degree at most two, so it has at most $2b$ edges. This means \[a + b - 1 \leq 2b,\] and therefore $a \leq b + 1$. For equality to hold, we need $C$ to have exactly $a + b - 1$ edges, meaning that it is a tree; and we need all vertices on its right side to have degree \emph{exactly} two, meaning that all its leaves are on the left. 
\end{proof}

We call a connected component of either $\cI$ or $\cI'$ \emph{left-lopsided} if it has more vertices on the left than the right, \emph{balanced} if it has an equal number of vertices on both sides, and \emph{right-lopsided} if it has more vertices on the right than the left. Lemma \ref{lem:component-sides} states that any left-lopsided component (of either $\cI$ or $\cI'$) has only one more vertex on the left than the right, and is a tree whose leaves are all on the left. Let $\cC$ be the collection of connected components of $\cI'$, and let $\cC_\ell \subseteq \cC$ be the collection of left-lopsided components of $\cI'$. 

\begin{lemma} \label{lem:comp-diffs}
    We have $\sabs{E'} - \abs{F} \leq \abs{\cC_\ell} - \abs{F_{\tsf{nc}}}$, with equality if and only if $\cI'$ has no right-lopsided components. 
\end{lemma}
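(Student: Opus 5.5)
The plan is to count vertices of $\cI'$ component by component. Every vertex of $\cI'$ lies in exactly one component. The left side of $\cI'$ is all of $E'$, including elements of $E'$ on no critical $2$-path, which form isolated left vertices and hence single-vertex left-lopsided components. The right side is all of $F_{\tsf{c}}$; note that every $f \in F_{\tsf{c}}$ has degree at least one in $\cI'$, as observed above, though we do not even need this. So if a component $C \in \cC$ has $a_C$ left vertices and $b_C$ right vertices, then
\[\sabs{E'} = \sum_{C \in \cC} a_C, \qquad \abs{F_{\tsf{c}}} = \sum_{C \in \cC} b_C.\]

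Since $\abs{F} = \abs{F_{\tsf{c}}} + \abs{F_{\tsf{nc}}}$, we get
\[\sabs{E'} - \abs{F} = \sum_{C \in \cC} (a_C - b_C) - \abs{F_{\tsf{nc}}}.\]
Now we apply Lemma \ref{lem:component-sides} to each component $C$ of $\cI'$:
\begin{itemize}
    \item if $C$ is left-lopsided, then $a_C - b_C = 1$;
    \item if $C$ is balanced, then $a_C - b_C = 0$;
    \item if $C$ is right-lopsided, then $a_C - b_C \leq -1 < 0$.
\end{itemize}
Hence $\sum_{C} (a_C - b_C) \leq \abs{\cC_\ell}$, which gives the desired inequality.

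Equality holds exactly when every summand over the non-left-lopsided components is zero. By the trichotomy above, this means there are no right-lopsided components, which proves the equality characterization.

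There is no real obstacle here. The only point needing care is the bookkeeping: we must check that isolated vertices of $E'$ are counted as components (each is left-lopsided with $a = 1$, $b = 0$), and that the partition of $F$ into $F_{\tsf{c}}$ and $F_{\tsf{nc}}$ accounts for every non-edge exactly once.
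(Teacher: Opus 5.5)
Your proof is correct and follows essentially the same route as the paper: split $\sabs{E'} - \abs{F_{\tsf{c}}}$ into per-component contributions over $\cC$, use Lemma \ref{lem:component-sides} to bound each contribution by $1$ for left-lopsided components and by $0$ otherwise, and note that equality forces every non-left-lopsided component to be balanced. Your explicit bookkeeping remark about isolated vertices of $E'$ (single-vertex left-lopsided components with $a = 1$, $b = 0$) is a harmless clarification that the paper leaves implicit.
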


\begin{proof}
    We can write 
    \begin{equation} 
        \sabs{E'} - \abs{F} = \sabs{E'} - \abs{F_{\tsf{c}}} - \abs{F_{\tsf{nc}}} = \sum_{C \in \cC} (\sabs{C \cap E'} - \sabs{C \cap F_{\tsf{c}}}) - \abs{F_{\tsf{nc}}}\label{eqn:split-sides}
    \end{equation}
    (in words, we are rewriting $\sabs{E'} - \sabs{F_{\tsf{c}}}$ --- the difference in the numbers of vertices on the left and right sides of $\cI'$ --- by splitting $\cI'$ into connected components and looking at the contribution of each component). 

    For each $C \in \cC_\ell$, we have $\sabs{C \cap E'} - \sabs{C \cap F_{\tsf{c}}} = 1$ (by Lemma \ref{lem:component-sides}), while for each $C \in \cC \setminus \cC_\ell$, we have \[\sabs{C \cap E'} - \sabs{C \cap F_{\tsf{c}}} \leq 0\] (this is an equality if $C$ is balanced and a strict inequality if $C$ is right-lopsided). Plugging these bounds into \eqref{eqn:split-sides}, we get \[\sabs{E'} - \abs{F} \leq \abs{\cC_\ell} - \abs{F_{\tsf{nc}}}.\] Equality holds if and only if $\sabs{C \cap E'} - \sabs{C \cap F_{\tsf{c}}} = 0$ for all $C \in \cC \setminus \cC_\ell$, meaning that $\cI'$ has no right-lopsided components. 
\end{proof}

Our goal was to show that $\sabs{E'} \leq \sabs{F}$; by Lemma \ref{lem:comp-diffs}, it suffices to show that $\sabs{\cC_\ell} \leq \sabs{F_{\tsf{nc}}}$. 

\subsection{Switch exits} 

To show that $\abs{\cC_\ell} \leq \abs{F_{\tsf{nc}}}$, we will define a way, based on the given matching $M$, of associating noncritical non-edges to components of $\cI'$. 

\begin{definition}
    For a connected component $C$ of $\cI'$, we say an ordered pair $(x, y) \in V(G) \times V(G)$ is a \vocab{switch exit} for $C$ if $xy$ is a noncritical non-edge in $G$, we have $x \in V(M)$, and $x^*y$ is in $C$. We call such a switch exit \emph{safe} if $y \not\in V(M)$.
\end{definition}

In other words, a switch exit for $C$ is a noncritical non-edge obtained by taking one vertex of $C$ --- which is either an edge or a critical non-edge in $G$ --- and `switching' one of its endpoints using $M$ (i.e., swapping it with its match in $M$). We write switch exits as ordered pairs to keep track of which vertex we switch.  

We define a bipartite multigraph $\cJ$ as follows: We take $\cJ$ to have vertex sets $\cC$ on the left and $F_{\tsf{nc}}$ on the right. For each $C \in \cC$ and $xy \in F_{\tsf{nc}}$, we draw an edge between $C$ and $xy$ if either $(x, y)$ or $(y, x)$ is a switch exit for $C$. If both $(x, y)$ and $(y, x)$ are switch exits for $C$, or if one is a \emph{safe} switch exit for $C$, then we double this edge (i.e., we have two edges between $C$ and $xy$). 

We will show that all vertices on the right side of $\cJ$ (i.e., elements of $F_{\tsf{nc}}$) have degree at most two, while all vertices in $\cC_\ell$ (which is a subset of the left side of $\cJ$) have degree \emph{at least} two; this will imply $\abs{\cC_\ell} \leq \abs{F_{\tsf{nc}}}$. 

\begin{lemma} \label{lem:right-deg-2}
    Every $xy \in F_{\tsf{nc}}$ has degree at most $2$ in $\cJ$. 
\end{lemma}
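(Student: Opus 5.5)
The proof is a direct count. Each switch exit determines its component uniquely, and at most two ordered pairs can possibly be switch exits for $xy$. Fix $xy \in F_{\tsf{nc}}$. The only ordered pairs that can make $xy$ adjacent to anything in $\cJ$ are $(x, y)$ and $(y, x)$. For $(x, y)$ to be a switch exit for some component, we need $x \in V(M)$ and the pair $x^*y$ to be a vertex of $\cI'$.

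First I would check that $x^*y$ is a genuine pair of vertices of $G$, so that it lies in at most one component of $\cI'$.
\begin{itemize}
    \item We have $x^* \neq y$, since otherwise $xy \in M \subseteq E$, contradicting that $xy$ is a non-edge.
    \item Since $\cC$ partitions the vertex set of $\cI'$, the pair $(x, y)$ is a switch exit for at most one component $C$, namely the component containing $x^*y$ (if $x^*y$ is a vertex of $\cI'$ at all).
\end{itemize}
The same holds for $(y, x)$ and the component containing $y^*x$.

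Next I would split into cases according to which of the two ordered pairs are switch exits.

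\textbf{Neither is a switch exit.} Then $xy$ has degree $0$ in $\cJ$.

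\textbf{Exactly one is a switch exit, say $(x, y)$ for $C$.} Then $xy$ is adjacent only to $C$. By the definition of $\cJ$, the edge $C$--$xy$ has multiplicity at most $2$: it is doubled exactly when $(x, y)$ is safe. So the degree is at most $2$.

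\textbf{Both are switch exits.} Then $x, y \in V(M)$, so neither switch exit is safe, since safety of $(x, y)$ requires $y \notin V(M)$ and safety of $(y, x)$ requires $x \notin V(M)$.
\begin{itemize}
    \item If both are switch exits for the same component $C$, the rule gives exactly a doubled edge to $C$, so the degree is $2$.
    \item If they are switch exits for two distinct components $C_1 \neq C_2$, then neither edge is doubled. The first doubling condition needs both pairs to be exits for the same component, and the second needs a safe exit. So the degree is $1 + 1 = 2$.
\end{itemize}

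There is no real obstacle here; the lemma is essentially forced by the definition of $\cJ$. The one point needing care is the last case: the two sources of doubling must never stack on top of two distinct components. This holds because having both $(x, y)$ and $(y, x)$ as switch exits forces $x, y \in V(M)$, which rules out safety.
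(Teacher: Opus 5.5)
Your proof is correct and is essentially the paper's argument. Both rest on the same two facts. First, an ordered pair $(x,y)$ is a switch exit for at most one component, since $x^*y$ lies in at most one component of $\cI'$. Second, if both orientations are switch exits then $x,y\in V(M)$, so neither is safe. The paper splits cases by which of $x,y$ lie in $V(M)$ rather than by which ordered pairs are switch exits, but that difference is only cosmetic.
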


\begin{proof}
    First note that an \emph{ordered} pair $(x, y)$ is the switch exit for at most one component $C \in \cC$, as $x^*y$ cannot be in more than one connected component of $\cI'$. 

    So for an unordered pair $xy \in F_{\tsf{nc}}$, if both $x$ and $y$ are in $V(M)$, then $xy$ has at most one edge in $\cJ$ corresponding to $(x, y)$ and at most one corresponding to $(y, x)$ (as neither of these are safe). Meanwhile, if $x \in V(M)$ and $y \not\in V(M)$, then $xy$ has at most two edges in $\cJ$ corresponding to $(x, y)$ (which may be a safe switch exit of one component) and none corresponding to $(y, x)$. And if $x$ and $y$ are both not in $V(M)$, then $xy$ has no edges in $\cJ$. 
\end{proof}

The harder part of the proof is showing that each $C \in \cC_\ell$ has degree at least two in $\cJ$, meaning that $C$ has either a safe switch exit or at least two switch exits. We will show this in a few steps. 

\begin{remark} 
    In illustrations, we draw edges in $E'$ in \textcolor{MediumPurple3}{purple}, edges in $M$ in \textcolor{SkyBlue3}{blue}, and non-edges as dashed lines in \textcolor{SpringGreen4}{green}. In Section \ref{sec:equality}, for illustrations that occur before specifying $M$, we will draw all edges in \textcolor{gray}{gray} (and we will still draw non-edges in the same way).  
\end{remark}

When finding these switch exits, we will often make use of the following fact, which says that for $C \in \cC_\ell$, any non-edge obtained by switching an \emph{edge} belonging to $C$ (i.e., a left vertex of $C$) is automatically noncritical, and therefore a switch exit for $C$. 

\begin{lemma} \label{lem:auto-noncrit}
    Let $C$ be a left-lopsided component of $\cI'$. Suppose that $xy$ is a non-edge of $G$, we have $x \in V(M)$, and $x^*y$ is an edge of $G$ which is in $C$. Then $xy$ is noncritical. 
\end{lemma}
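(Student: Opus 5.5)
The plan is to argue by contradiction and use the structural description of left-lopsided components from Lemma \ref{lem:component-sides}: such a component is a tree all of whose leaves lie on the left side. So suppose that $xy$ is a \emph{critical} non-edge. The aim is to show that $xy$ then becomes a leaf of $C$ on the right side, which is impossible.

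First, I will identify the critical $2$-path of $xy$. Since $xx^* \in M \subseteq E$ and $x^*y \in E$, the sequence $(x, x^*, y)$ is a $2$-path between $x$ and $y$. Its vertices are distinct: $x^* \neq y$ because $xy$ is a non-edge while $xx^*$ is an edge. Because $xy$ is critical, this is its unique $2$-path. Hence, in $\cI$, the vertex $xy \in F_{\tsf{c}}$ has exactly the two neighbours $xx^*$ and $x^*y$.

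Next, I will pass to $\cI'$. Deleting $M$ removes $xx^*$, and $x^*y \notin M$ because $x^*y \in C \subseteq V(\cI')$. So in $\cI'$ the vertex $xy$ has exactly one neighbour, namely $x^*y$. Since $x^*y$ lies in $C$, the vertex $xy$ also lies in $C$, and it is a leaf of $C$ on the right side. This contradicts Lemma \ref{lem:component-sides}, because $C$ is left-lopsided and so has all of its leaves on the left. Therefore $xy$ is noncritical.

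There is no real obstacle here. The only point needing care is checking that $(x, x^*, y)$ is a genuine $2$-path, so that it must be the critical path, and that exactly one of its two edges survives in $\cI'$.
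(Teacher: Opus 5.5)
Your proposal is correct and follows the paper's proof essentially step for step. You assume $xy$ is critical, so $(x, x^*, y)$ is its critical $2$-path. Removing $xx^* \in M$ leaves $xy$ with the single neighbour $x^*y$ in $\cI'$, which makes $xy$ a right-side leaf of $C$ and contradicts Lemma \ref{lem:component-sides}. Your extra checks that $x^* \neq y$ and $x^*y \notin M$ are sound details that the paper leaves implicit.
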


\begin{proof}
    Suppose for contradiction that $xy$ is a critical non-edge in $G$. Then $(x, x^*, y)$ is its critical $2$-path. So $xy$ is a vertex on the right side of $\cI'$, and its only neighbor in $\cI'$ is $x^*y$ (as $xx^*$ is in $M$, so it is not a vertex in $\cI'$). 

    \begin{center}
        \begin{tikzpicture}
            \node [sdot, label = below: {$x$}] (x) at (-1, 0) {};
            \node [sdot, label = below: {$y$}] (y) at (1, 0) {};
            \node [sdot, label = above: {$x^*$}] (xst) at (0, 2) {};
            \draw [MediumPurple3] (xst) -- (y);
            \draw [SkyBlue3] (xst) -- (x);
            \draw [SpringGreen4, dashed] (x) -- (y);
        \end{tikzpicture}
    \end{center}

    Since $C$ is the component of $\cI'$ containing $x^*y$, it also contains $xy$. But $xy$ is a leaf in $C$, which contradicts Lemma \ref{lem:component-sides} (since $C$ is left-lopsided, it cannot have any leaves on the right side). 
\end{proof}

We first handle the case where $C$ consists of a single vertex on the left side of $\cI'$ (i.e., a single edge of $G$). 

\begin{lemma} \label{lem:single-vtx-exits}
    Suppose that $C$ is a connected component of $\cI'$ consisting of a single vertex on the left side of $\cI'$ (i.e., a single edge of $G$). Then $C$ has either a safe switch exit or two switch exits. 
\end{lemma}

\begin{proof}
    Let $C$ consist of the single edge $xy$ of $G$. Because $M$ is a maximal matching, at least one of $x$ and $y$ must be in $V(M)$ (otherwise we could add $xy$ to $M$); suppose without loss of generality that $x \in V(M)$. (Note that $x^* \neq y$ --- $xy$ cannot be in $M$, as it is in $\cI'$.)

    \begin{center}
        \begin{tikzpicture}
            \node [sdot, label = below: {$x$}] (x) at (-1, 0) {};
            \node [sdot, label = below: {$y$}] (y) at (1, 0) {};
            \node [sdot, label = above: {$x^*$}] (xst) at (0, 2) {};
            \draw [MediumPurple3] (x) -- (y);
            \draw [SkyBlue3] (x) -- (xst);
        \end{tikzpicture}
    \end{center}

    First, we claim that $x^*y$ is a non-edge in $G$ --- if $x^*y$ were an edge in $G$, then $xy$ would be in a triangle, so by Lemma \ref{lem:triangle}, it would have to be on some critical $2$-path. Then $xy$ would have a neighbor in $\cI'$ (the critical non-edge corresponding to that critical $2$-path), contradicting our assumption that $xy$ forms a single-vertex connected component. 

    \begin{center}
        \begin{tikzpicture}
            \node [sdot, label = below: {$x$}] (x) at (-1, 0) {};
            \node [sdot, label = below: {$y$}] (y) at (1, 0) {};
            \node [sdot, label = above: {$x^*$}] (xst) at (0, 2) {};
            \draw [MediumPurple3] (x) -- (y);
            \draw [SkyBlue3] (x) -- (xst);
            \draw [SpringGreen4, dashed] (xst) -- (y);
        \end{tikzpicture}
    \end{center}

    Now, by Lemma \ref{lem:auto-noncrit}, $x^*y$ must be noncritical. So $(x^*, y)$ is a switch exit for $C$ (it is a noncritical non-edge which switches to $xy$, which is in $C$). 

    Now, if $y \not\in V(M)$, then $(x^*, y)$ is a safe switch exit. Meanwhile, if $y \in V(M)$, then the same argument (with the roles of $x$ and $y$ swapped) shows that $(y^*, x)$ is also a switch exit for $C$, so $C$ has two switch exits. 
\end{proof}

We now handle the case where $C$ consists of more than one vertex. 

\begin{lemma} \label{lem:big-component-exits}
    Suppose that $C$ is a left-lopsided component of $\cI'$ consisting of more than one vertex of $\cI'$. Then $C$ has either a safe switch exit, or at least two switch exits. 
\end{lemma}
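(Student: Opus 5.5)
Since $C$ is left-lopsided, Lemma \ref{lem:component-sides} says $C$ is a tree whose leaves all lie on the left side. A tree with more than one vertex has at least two leaves, so $C$ contains at least two distinct edges $e_1, e_2$ of $G$ that are leaves of $C$. The plan is to show that \emph{every} leaf edge $uv$ of $C$ produces a switch exit $(x, y)$ with $x^*y = uv$. This suffices because an ordered switch exit $(x,y)$ determines the element $x^*y$ of $C$, so distinct leaves give distinct switch exits, and hence at least two of them. (Safety is then not even needed, although whenever the switched-to endpoint is unmatched we get a safe exit for free.)

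Fix a leaf $uv$ of $C$. Its unique neighbour in $\cI'$ is a critical non-edge $f$. Its critical $2$-path contains $uv$ and another edge of $E'$; say the path is $(u, v, w)$, so $f = uw$ and $vw \in E' \cap C$. Any other critical $2$-path through $uv$ must have its second edge in $M$. By maximality of $M$, at least one of $u, v$ lies in $V(M)$. The basic move is the following. If $z \in \{u,v\}$ is matched, $z' $ is the other endpoint, and $z^*z'$ is a non-edge, then Lemma \ref{lem:auto-noncrit} makes $z^*z'$ noncritical, so $(z^*, z')$ is a switch exit for $C$ which switches to $uv$. Moreover it is safe if $z' \notin V(M)$. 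So it remains to rule out the situation where, for every matched endpoint $z$, the pair $z^*z'$ is an edge, i.e.\ $uv$ lies in a triangle with $z^*$. Note that $u^* \neq w$, since $uw$ is a non-edge.

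The main obstacle is this triangle case. Suppose first that $u$ is matched and $u^*v \in E$. I would study the non-edge $u^*w$, or else the edge $u^*w$. If $u^*w$ is an edge, then $(u, u^*, w)$ is a second $2$-path between $u$ and $w$, contradicting criticality of $f$. So $u^*w$ is a non-edge, and its $2$-paths include $(u^*, v, w)$. I would then try to show that $u^*v$ or $u^*w$ yields a switch exit involving $vw$ or $uv$ itself, or else derive that $uv$ has a second neighbour in $\cI'$, contradicting that it is a leaf. The natural tool is Lemma \ref{lem:triangle}: the edge $u^*v$ lies in a triangle, so it lies on some critical $2$-path. I would trace where that critical non-edge attaches in $\cI'$, and use that $C$ is a tree with no right leaves (Lemma \ref{lem:component-sides}) to force a contradiction.

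The case where $v$ is matched and $v^*u \in E$ is handled symmetrically, now using the path through $v$ and the vertex $w$. The subcase where both endpoints are matched and both triangles occur should follow by combining the two arguments. If the leaf-by-leaf argument fails in some configuration, my fallback is to count exits jointly over the two leaves. There I would use the centre structure of $C$ (the critical non-edges adjacent to each leaf) to produce a second exit from an internal edge of $C$, again via Lemma \ref{lem:auto-noncrit}.
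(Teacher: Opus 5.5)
\textbf{There is a genuine gap.} Your argument rests on the claim that every leaf $uv$ of $C$ has a switch exit that switches back to $uv$ itself. That claim is false, so the triangle case is a real obstruction, not a technicality. Take vertices $u,v,w,p,q,s$ with edges $uv,vw,vp,up,wq,pq,ps,qs$ and $M=\{up,wq\}$. The non-edges $uw,uq,us,vs,ws$ are critical (unique middle vertices $v,p,p,p,q$), and $vq$, $wp$ are noncritical. Every edge is either in no triangle ($vw$, $wq$) or on one of these critical paths, so $G$ is diameter-$2$-critical. $M$ is maximal, and $\{uv,vw,uw\}$ is a left-lopsided component of $\cI'$ with leaf $uv$ and critical path $(u,v,w)$. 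Here $v\notin V(M)$ and $u^*v=pv$ is an edge, so \emph{no} switch exit switches to $uv$. The component's only exits are $(q,v)$, which switches to the sibling $vw$, and $(p,w)$, which switches to the critical non-edge $uw$. The lemma survives because $(q,v)$ is safe, but your intermediate claim fails. Note also that $(p,w)$ is your $u^*w$: its $2$-path $(u^*,v,w)$ runs through $vw$, not through the leaf, so the leaf property does not force it to be noncritical. Exits genuinely land on other vertices of $C$, your injectivity-by-leaf argument breaks, and the fallback you sketch does not replace it.

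\textbf{What is missing.} The paper applies maximality of $M$ to the sibling $vw$ rather than to the leaf, so $v$ or $w$ is matched.

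If $v$ is matched, criticality of $uw$ forbids $v^*u$ and $v^*w$ both being edges. Lemma~\ref{lem:auto-noncrit} then gives an exit $(v^*,u)$ to the leaf or $(v^*,w)$ to $vw$.

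If $w$ is matched, either $w^*v$ is a non-edge, giving the exit $(w^*,v)$ to $vw$, or $w^*v\in E$. In the latter case $w^*u$ is a non-edge, since otherwise $(u,w^*,w)$ is a second $2$-path. It is also noncritical: its critical path would be $(w^*,v,u)$, giving the leaf $uv$ a second neighbour. So $(w^*,u)$ is an exit to the non-edge $uw$.

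For the counting: an exit $(v^*,w)$ or $(w^*,v)$ to $vw$ is safe unless the other endpoint of $vw$ is also matched. In that case the other case supplies a second, distinct exit from the same leaf. Otherwise each leaf yields an exit either to itself (an edge of $G$) or to its neighbour $uw$ (a non-edge), and these two kinds cannot coincide. Within each kind the exit determines the leaf. For the second kind, unswitch to recover the non-edge, take its critical path, and the leaf is the path edge through the unswitched vertex. Running this from two leaves gives the lemma; this is how the paper's proof proceeds.
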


\begin{proof}
    We will first focus on showing that we can always find \emph{one} switch exit for $C$, and then show that this proof actually gives a safe switch exit or two switch exits. 

    By Lemma \ref{lem:component-sides}, $C$ is a tree whose leaves are all on the left. Let $uy$ be a leaf of $C$ on the left, and let its one neighbor in $\cI'$ be $xy$ (so $xy$ is a critical non-edge in $G$, and $(x, u, y)$ is its critical $2$-path). Then $ux$ must also be in $C$ (i.e., it is not in $M$), as otherwise $xy$ would be a leaf of $C$ on the right. 

    \begin{center}
        \begin{tikzpicture}
            \node [sdot, label = below: {$x$}] (x) at (-1, 0) {};
            \node [sdot, label = below: {$y$}] (y) at (1, 0) {};
            \node [sdot, label = above: {$u$}] (u) at (0, 2) {};
            \draw [MediumPurple3] (x) -- (u) -- (y);
            \draw [SpringGreen4, dashed] (x) -- (y);
        \end{tikzpicture}
    \end{center}

    Because $M$ is a maximal matching, either $u$ or $x$ must be in $V(M)$ --- otherwise we could add $ux$ to $M$. We will perform casework based on which of $u$ and $x$ is in $V(M)$ (note that both may be in $V(M)$, so \casetext{Case 1} and \casetext{Case 2} may both occur). 

    \case{1}{$u \in V(M)$} Then we have the following picture. 

    \begin{center}
        \begin{tikzpicture}
            \node [sdot, label = below: {$x$}] (x) at (-1, 0) {};
            \node [sdot, label = below: {$y$}] (y) at (1, 0) {};
            \node [sdot, label = above: {$u$}] (u) at (0, 2) {};
            \node [sdot, label = above: {$u^*$}] (ust) at (-2, 2) {};
            \draw [MediumPurple3] (x) -- (u) -- (y);
            \draw [SpringGreen4, dashed] (x) -- (y);
            \draw [SkyBlue3] (u) -- (ust);
        \end{tikzpicture}
    \end{center}
    
    Because $xy$ is critical, $u^*x$ and $u^*y$ cannot both be edges (if they were, then $(x, u^*, y)$ would be a second $2$-path between $x$ and $y$, in addition to $(x, u, y)$). 

    \case{1a}{$u^*x$ is a non-edge} Then $u^*x$ is noncritical by Lemma \ref{lem:auto-noncrit}, as it is a non-edge which switches to $ux$ (which is an edge of $G$ belonging to $C$). So $(u^*, x)$ is a switch exit for $C$. 

    \begin{center}
        \begin{tikzpicture}
            \node [sdot, label = below: {$x$}] (x) at (-1, 0) {};
            \node [sdot, label = below: {$y$}] (y) at (1, 0) {};
            \node [sdot, label = above: {$u$}] (u) at (0, 2) {};
            \node [sdot, label = above: {$u^*$}] (ust) at (-2, 2) {};
            \draw [MediumPurple3] (x) -- (u) -- (y);
            \draw [SpringGreen4, dashed] (x) -- (y);
            \draw [SpringGreen4, dashed] (ust) -- (x);
            \draw [SkyBlue3] (u) -- (ust);
        \end{tikzpicture}
    \end{center}

    \case{1b}{$u^*y$ is a non-edge} Then the same reasoning shows that $u^*y$ is noncritical, so $(u^*, y)$ is a switch exit for $C$. 

    \begin{center}
        \begin{tikzpicture}
            \node [sdot, label = below: {$x$}] (x) at (-1, 0) {};
            \node [sdot, label = below: {$y$}] (y) at (1, 0) {};
            \node [sdot, label = above: {$u$}] (u) at (0, 2) {};
            \node [sdot, label = above: {$u^*$}] (ust) at (-2, 2) {};
            \draw [MediumPurple3] (x) -- (u) -- (y);
            \draw [SpringGreen4, dashed] (x) -- (y);
            \draw [SpringGreen4, dashed] (ust) -- (y);
            \draw [SkyBlue3] (u) -- (ust);
        \end{tikzpicture}
    \end{center}

    \case{2}{$x \in V(M)$} Then we have the following picture. 

    \begin{center}
        \begin{tikzpicture}
            \node [sdot, label = below: {$x$}] (x) at (-1, 0) {};
            \node [sdot, label = below: {$y$}] (y) at (1, 0) {};
            \node [sdot, label = above: {$u$}] (u) at (0, 2) {};
            \node [sdot, label = left: {$x^*$}] (xst) at (-2, 1) {};
            \draw [MediumPurple3] (x) -- (u) -- (y);
            \draw [SpringGreen4, dashed] (x) -- (y);
            \draw [SkyBlue3] (xst) -- (x);
        \end{tikzpicture}
    \end{center}

    We now perform casework based on whether $x^*u$ is an edge or non-edge. 

    \case{2a}{$x^*u$ is a non-edge} Then $x^*u$ is noncritical by Lemma \ref{lem:auto-noncrit} (it is a non-edge which switches to $xu$, which is an edge of $G$ belonging to $C$), so $(x^*, u)$ is a switch exit of $C$. 

    \begin{center}
        \begin{tikzpicture}
            \node [sdot, label = below: {$x$}] (x) at (-1, 0) {};
            \node [sdot, label = below: {$y$}] (y) at (1, 0) {};
            \node [sdot, label = above: {$u$}] (u) at (0, 2) {};
            \node [sdot, label = left: {$x^*$}] (xst) at (-2, 1) {};
            \draw [MediumPurple3] (x) -- (u) -- (y);
            \draw [SpringGreen4, dashed] (x) -- (y);
            \draw [SkyBlue3] (xst) -- (x);
            \draw [SpringGreen4, dashed] (u) -- (xst);
        \end{tikzpicture}
    \end{center}

    \case{2b}{$x^*u$ is an edge} First note that $x^*y$ must be a non-edge, as otherwise $(x, x^*, y)$ would be a second $2$-path connecting $x$ and $y$ (contradicting the criticality of $xy$). 

    \begin{center}
        \begin{tikzpicture}
            \node [sdot, label = below: {$x$}] (x) at (-1, 0) {};
            \node [sdot, label = below: {$y$}] (y) at (1, 0) {};
            \node [sdot, label = above: {$u$}] (u) at (0, 2) {};
            \node [sdot, label = left: {$x^*$}] (xst) at (-2, 1) {};
            \draw [MediumPurple3] (x) -- (u) -- (y);
            \draw [SpringGreen4, dashed] (x) -- (y) -- (xst);
            \draw [SkyBlue3] (xst) -- (x);
            \draw [MediumPurple3] (u) -- (xst);
        \end{tikzpicture}
    \end{center}
    
    Now, we claim that $x^*y$ is noncritical. Assume for contradiction that $x^*y$ is a critical non-edge, as in the above picture. Then in $\cI'$, we have the adjacencies \[ux \sim xy \sim uy \sim x^*y \sim ux^*\] (we write $ux \sim xy$ to mean that $ux$ and $xy$ are adjacent in $\cI'$; this is because $ux$ and $uy$ are on the critical $2$-path for $xy$, and $ux^*$ and $uy$ are on the critical $2$-path for $x^*y$). This contradicts our assumption that $uy$ was a leaf in $C$ (as it has at least two neighbors in $\cI'$, and therefore also $C$). 

    This shows $x^*y$ must be noncritical. Then $x^*y$ is a noncritical non-edge which switches to $xy$, which is a non-edge of $G$ belonging to $C$; this means $(x^*, y)$ is a switch exit for $C$. 

    So far, this argument shows that in all cases, we can always find at least \emph{one} switch exit for $C$. Now we will show that we can find either one safe switch exit or two switch exits. 

    First note that if we are in \casetext{Case 1a}, then either $(u^*, x)$ is a safe switch exit, or $x$ is also in $V(M)$, in which case we can get a second switch exit from \casetext{Case 2}. Similarly, if we are in \casetext{Case 2a}, then either $(x^*, u)$ is a safe switch exit, or $u$ is also in $V(M)$, in which case we get a second switch exit from \casetext{Case 1}. 

    Now, because $C$ is a tree with more than one vertex, it has at least two leaves, and we can run this argument starting from all of those leaves (recall that we began the argument by choosing a leaf $uy$ of $C$). If any leaf lands in \casetext{Case 1a} or \casetext{Case 2a}, then we win, as explained above (we get either a safe switch exit or two switch exits just from that leaf). So we only need to worry about the case where all leaves land in \casetext{Case 1b} and \casetext{Case 2b}. 

    Also, if one leaf lands in \casetext{Case 1b} and another lands in \casetext{Case 2b}, then they produce different switch exits --- this is because a switch exit produced by \casetext{Case 1b} switches to an \emph{edge} belonging to $C$, while a switch exit produced by \casetext{Case 2b} switches to a \emph{non-edge} belonging to $C$. So it remains only to consider the case where all leaves land in \casetext{Case 1b} and the case where all leaves land in \casetext{Case 2b}. 

    Next, we claim that two different leaves in \casetext{Case 1b} produce different switch exits. This is because if a leaf produces a switch exit $(u^*, y)$ through \casetext{Case 1b}, then this switch exit switches back to the leaf --- i.e., the leaf must have been $uy$. So we can recover the identity of that leaf knowing the switch exit (and that it was produced by \casetext{Case 1b}). 

    Similarly, we claim that two different leaves in \casetext{Case 2b} produce different switch exits --- i.e., given that a leaf produces a switch exit $(x^*, y)$ through \casetext{Case 2b}, we can recover the identity of that leaf. This is because we know $xy$ (obtained by switching $x^*$ to $x$) is the unique neighbor of our starting leaf. And $xy$ must be critical, so it has a unique critical $2$-path $(x, u, y)$. This recovers $u$, and we now know that the leaf must have been $uy$ (i.e., the edge of this critical $2$-path using the second vertex of our switch exit). 

    This shows that we can always find either a safe switch exit for $C$ or at least two switch exits, as desired. 
\end{proof}

Combining Lemmas \ref{lem:single-vtx-exits} and \ref{lem:big-component-exits} (and noting that a safe switch exit corresponds to a doubled edge in $\cJ$) gives the following. 

\begin{corollary} \label{cor:left-deg-2}
    Every $C \in \cC_\ell$ has degree at least $2$ in $\cJ$. 
\end{corollary}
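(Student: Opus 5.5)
The plan is to combine Lemmas \ref{lem:single-vtx-exits} and \ref{lem:big-component-exits} and translate switch exits into edges of $\cJ$.

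First I fix $C \in \cC_\ell$ and split into two cases according to the size of $C$. A component consisting of a single right vertex has one more vertex on the right than on the left, so it is right-lopsided rather than left-lopsided. Hence either $C$ is a single left vertex (a single edge of $G$), or it has more than one vertex.
\begin{itemize}
\item In the first case, Lemma \ref{lem:single-vtx-exits} applies.
\item In the second case, Lemma \ref{lem:big-component-exits} applies, since $C$ is left-lopsided by assumption.
\end{itemize}
Either way, $C$ has a safe switch exit or at least two switch exits.

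Next I check how these exits contribute to the degree of $C$ in $\cJ$.

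\textbf{Safe switch exit.} Suppose $(x,y)$ is a safe switch exit for $C$. By definition $xy \in F_{\tsf{nc}}$, so $xy$ is a right vertex of $\cJ$. The construction of $\cJ$ joins $C$ and $xy$ by a doubled edge in this situation. So $C$ has degree at least two.

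\textbf{Two distinct switch exits.} Suppose $(x,y) \neq (x',y')$ are both switch exits for $C$. Each gives an edge of $\cJ$ from $C$ to the noncritical non-edge $xy$ or $x'y'$, respectively. There are two subcases.
\begin{itemize}
\item If the unordered pairs $\{x,y\}$ and $\{x',y'\}$ differ, the two edges go to different right vertices. This gives degree at least two.
\item If the pairs coincide, then $(x',y')=(y,x)$, so both orientations of $xy$ are switch exits for $C$. By the definition of $\cJ$, the edge between $C$ and $xy$ is then doubled, again giving degree at least two.
\end{itemize}

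The only subtle point is this last bookkeeping. One must make sure that "two switch exits" as produced in Lemma \ref{lem:big-component-exits} means two distinct ordered pairs. That lemma explicitly argues that the exits it produces are different, and Lemma \ref{lem:single-vtx-exits} produces $(x^*,y)$ and $(y^*,x)$, which are distinct ordered pairs. So the edge-doubling convention covers the coincident-unordered-pair case, and the corollary follows.
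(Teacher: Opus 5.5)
Your proposal is correct and matches the paper's argument: the paper likewise just combines Lemmas \ref{lem:single-vtx-exits} and \ref{lem:big-component-exits} and notes that a safe switch exit gives a doubled edge in $\cJ$. Your extra bookkeeping makes explicit what the paper leaves implicit: that the case split is exhaustive, and that two distinct ordered switch exits yield either two distinct neighbors or a doubled edge.
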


We can now combine this with Lemma \ref{lem:right-deg-2} (which states that every element of $F_{\tsf{nc}}$, i.e., every vertex on the right side of $\cJ$, has degree \emph{at most} $2$) to get the following. 

\begin{lemma} \label{lem:j-sides}
    We have $\abs{\cC_\ell} \leq \abs{F_{\tsf{nc}}}$. Furthermore, if equality holds, then no component $C \in \cC \setminus \cC_\ell$ can have a switch exit. 
\end{lemma}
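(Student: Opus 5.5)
We will prove both parts of the statement by double counting the edges of the bipartite multigraph $\cJ$. Write $d_{\cJ}(\cdot)$ for degree in $\cJ$, and let $e(\cJ)$ be the total number of edges of $\cJ$, with doubled edges counted twice.

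First, count from the right side. Every vertex $xy \in F_{\tsf{nc}}$ has degree at most $2$ by Lemma \ref{lem:right-deg-2}, so
\[
e(\cJ) = \sum_{xy \in F_{\tsf{nc}}} d_{\cJ}(xy) \leq 2\abs{F_{\tsf{nc}}}.
\]

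Next, count from the left side, whose vertex set is all of $\cC$. Every component in $\cC_\ell$ has degree at least $2$ by Corollary \ref{cor:left-deg-2}, and every other component contributes a nonnegative amount. Therefore
\[
e(\cJ) = \sum_{C \in \cC} d_{\cJ}(C) \geq \sum_{C \in \cC_\ell} d_{\cJ}(C) + \sum_{C \in \cC \setminus \cC_\ell} d_{\cJ}(C) \geq 2\abs{\cC_\ell} + \sum_{C \in \cC \setminus \cC_\ell} d_{\cJ}(C).
\]

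Combining the two bounds gives
\[
2\abs{\cC_\ell} + \sum_{C \in \cC \setminus \cC_\ell} d_{\cJ}(C) \leq 2\abs{F_{\tsf{nc}}},
\]
and dropping the nonnegative sum yields $\abs{\cC_\ell} \leq \abs{F_{\tsf{nc}}}$.

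For the equality statement, suppose $\abs{\cC_\ell} = \abs{F_{\tsf{nc}}}$. Then the displayed inequality forces $\sum_{C \in \cC \setminus \cC_\ell} d_{\cJ}(C) = 0$, so every $C \in \cC \setminus \cC_\ell$ has degree $0$ in $\cJ$.

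It remains to check that degree $0$ really means $C$ has no switch exit. By the definition of $\cJ$, if $(x,y)$ is a switch exit for $C$, then $xy \in F_{\tsf{nc}}$ and there is at least one edge of $\cJ$ between $C$ and $xy$. So a component with a switch exit has positive degree in $\cJ$. Hence no $C \in \cC \setminus \cC_\ell$ has a switch exit.

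There is no real obstacle here: the substantive work is in Lemma \ref{lem:right-deg-2} and Corollary \ref{cor:left-deg-2}, and this argument only combines them.
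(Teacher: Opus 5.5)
Your proposal is correct and follows essentially the same argument as the paper: double counting the edges of $\cJ$, using Lemma \ref{lem:right-deg-2} for the upper bound $2\abs{F_{\tsf{nc}}}$ and Corollary \ref{cor:left-deg-2} for the lower bound $2\abs{\cC_\ell}$. In the equality case you force every $C \in \cC \setminus \cC_\ell$ to have degree $0$ in $\cJ$, just as the paper does. Your explicit check that a switch exit always yields an edge of $\cJ$ is a harmless extra detail the paper leaves implicit.
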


\begin{proof}
    Recall that $\cJ$ is a bipartite multigraph with vertex sets $\cC$ on the left and $F_{\tsf{nc}}$ on the right. Lemma \ref{lem:right-deg-2} states that all elements of $F_{\tsf{nc}}$ have degree at most $2$ in $\cJ$, so \[e(\cJ) \leq 2\abs{F_{\tsf{nc}}}.\] On the other hand, Corollary \ref{cor:left-deg-2} states that all elements of $\cC_\ell \subseteq \cC$ have degree at least $2$ in $\cJ$, so \[e(\cJ) \geq 2\abs{\cC_\ell}.\] Combining these, we get that $\abs{\cC_\ell} \leq \abs{F_{\tsf{nc}}}$. For equality to hold, we must have $e(\cJ) = 2\abs{\cC_\ell}$, which means each $C \in \cC_\ell$ has degree \emph{exactly} $2$ in $\cJ$ and each $C \in \cC \setminus \cC_\ell$ has degree $0$, meaning that it has no switch exits. 
\end{proof}

Combining Lemmas \ref{lem:comp-diffs} and \ref{lem:j-sides} completes the proof of Lemma \ref{lem:bound} (and gives a few conditions that must hold in an equality case, which we will use in our proof of Lemma \ref{lem:equality} in the next section). 

\begin{corollary} \label{cor:bound}
    We have $\abs{E'} \leq \abs{F}$. Furthermore, if equality holds, then $\cI'$ does not contain any right-lopsided components, and no balanced component of $\cI'$ has a switch exit. 
\end{corollary}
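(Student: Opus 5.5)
The plan is to chain Lemma \ref{lem:comp-diffs} with Lemma \ref{lem:j-sides}. Each lemma supplies one inequality together with a description of when it is tight, so the work is mainly bookkeeping on the equality conditions.

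First, Lemma \ref{lem:comp-diffs} gives
\[\sabs{E'} - \abs{F} \leq \abs{\cC_\ell} - \abs{F_{\tsf{nc}}},\]
and Lemma \ref{lem:j-sides} gives $\abs{\cC_\ell} - \abs{F_{\tsf{nc}}} \leq 0$. Together these yield $\sabs{E'} \leq \abs{F}$, which is the first claim and hence Lemma \ref{lem:bound}.

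Next, suppose $\sabs{E'} = \abs{F}$. Then $0 = \sabs{E'} - \abs{F} \leq \abs{\cC_\ell} - \abs{F_{\tsf{nc}}} \leq 0$, so both inequalities are equalities. Equality in the first means, by the equality clause of Lemma \ref{lem:comp-diffs}, that $\cI'$ has no right-lopsided components. Equality in the second, $\abs{\cC_\ell} = \abs{F_{\tsf{nc}}}$, means by Lemma \ref{lem:j-sides} that no component in $\cC \setminus \cC_\ell$ has a switch exit. Since there are no right-lopsided components, $\cC \setminus \cC_\ell$ consists exactly of the balanced components, so no balanced component of $\cI'$ has a switch exit.

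There is no substantive obstacle here. The only point needing care is to use both equality conditions simultaneously, noting that the sandwich forces each inequality to be tight on its own. The balanced-component statement also needs the right-lopsided conclusion, so that "not left-lopsided" can be read as "balanced" (even without it, the statement about balanced components follows directly, since balanced components lie in $\cC \setminus \cC_\ell$).
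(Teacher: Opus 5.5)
Your proposal is correct and is the paper's argument: the corollary comes from combining Lemmas \ref{lem:comp-diffs} and \ref{lem:j-sides}, and the sandwich $0 \leq \abs{\cC_\ell} - \abs{F_{\tsf{nc}}} \leq 0$ makes both equality clauses apply. One small point: as your own parenthetical observes, the conclusion about balanced components does not actually ``need'' the absence of right-lopsided components, since balanced components always lie in $\cC \setminus \cC_\ell$, so the sentence before that parenthetical is unnecessary.
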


\section{Characterization of the equality cases} \label{sec:equality}

In this section, we prove Lemma \ref{lem:equality}, which states that if $G$ is a diameter-$2$-critical graph for which equality holds in Lemma \ref{lem:bound} for \emph{every} maximal matching $M$ (we will refer to such a graph $G$ as an \emph{equality case}), then $G$ is triangle-free. We will actually prove the following slightly stronger statement. 

\begin{lemma} \label{lem:strong-equality}
    If $G$ is an equality case, then either $G$ is a star $K_{1, n - 1}$, or $G$ has no critical non-edges. 
\end{lemma}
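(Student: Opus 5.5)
The plan is to argue by contradiction. Suppose $G$ is an equality case and has a critical non-edge $xy$ with critical $2$-path $(x, u, y)$. We want to show that $G$ must then be a star. The freedom we exploit is that equality holds for \emph{every} maximal matching, so Corollary \ref{cor:bound} applies to whichever maximal matching we like. For each such $M$ it says two things: $\cI'$ has no right-lopsided components, and no balanced component of $\cI'$ has a switch exit. The goal is to choose $M$ so that one of these two conditions visibly fails.

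The first choice is a maximal matching $M$ containing $ux$. Then in $\cI'$ the vertex $xy$ has only one neighbor, namely $uy$, so $xy$ is a right leaf of its component $C$. By the counting in Lemma \ref{lem:component-sides}, a component with a right leaf cannot be left-lopsided. By Corollary \ref{cor:bound} it cannot be right-lopsided either. So $C$ is balanced, and the counting forces more: $C$ is a tree, $xy$ is its unique right leaf, and every other right vertex of $C$ has degree two. It then remains to exhibit a switch exit for $C$, which gives the contradiction.

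Two natural candidates come from switching endpoints of $xy$ and $uy$.
\begin{enumerate}[(i)]
    \item Suppose $y \in V(M)$. Then $y^*x$ is a non-edge, since otherwise $(x, y^*, y)$ would be a second $2$-path between $x$ and $y$. If $y^*x$ is noncritical, then $(y^*, x)$ is a switch exit for $C$, because switching $y^*$ back to $y$ gives $xy \in C$.
    \item Alternatively, switch $y$ in the edge $uy$, giving the pair $uy^*$. If $uy^*$ is a non-edge, the task is again to show it is noncritical.
\end{enumerate}
In either case, criticality would add a new right vertex adjacent to an edge already in $C$. I would then use the rigid structure of $C$ (a tree with exactly one right leaf, every other right vertex of degree exactly two) to derive a contradiction, in the spirit of Lemma \ref{lem:auto-noncrit} and Case 2b of Lemma \ref{lem:big-component-exits}. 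I also have the freedom to re-choose $M$ among the matchings containing $ux$ (for instance, matching $y$ to a chosen neighbor) to make this work.

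The remaining case is that $y$ can never be covered by a maximal matching containing $ux$. That means every neighbor of $y$ other than $u$ is forced to be matched elsewhere, and by symmetry the same holds with the roles of $x$ and $y$ exchanged. Here I would vary the matching further: take $M$ containing $uy$ instead, or matchings avoiding $u$ entirely. The aim is to show that all edges of $G$ pass through $u$, since otherwise some choice yields a right-lopsided component or a switch exit. That conclusion is exactly $G = K_{1, n-1}$. The main obstacle I expect is this last bookkeeping step together with the noncriticality claims in (i) and (ii). Specifically, a critical $y^*x$ or $uy^*$ only gives a contradiction if its critical $2$-path meets $C$, and I would first try to force that by choosing $M$ so that $y^*$ is a neighbor of $u$ whenever possible.
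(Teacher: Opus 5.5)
There is a genuine gap: the subcase of (i) you leave open is exactly where your argument gets stuck, and choosing $y^*$ cleverly cannot get you out of it.

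First, what works. With $ux\in M$, the component $C$ of $xy$ in $\cI'$ is a balanced tree whose only right leaf is $xy$. Option (ii) also works whenever $w:=y^*$ is not adjacent to $u$. In that case $uw$ is a non-edge whose only possible critical $2$-path is $(u,y,w)$. Since $yw\in M$, a critical $uw$ would hang off $uy\in C$ as a second right leaf, and a noncritical $uw$ gives the switch exit $(w,u)$.

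Your ``remaining case'' is trivial rather than hard. If $y$ has any neighbor $w\neq u$, then $w\neq x$ and $\{ux,yw\}$ extends to a maximal matching. So $y$ is uncoverable only when $N(y)=\{u\}$. Then diameter $2$ forces a spanning star at $u$, and criticality gives $G=K_{1,n-1}$.

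The gap is the case where every neighbor $w\neq u$ of $y$ is adjacent to $u$ and $xw$ is critical. The critical $2$-path of $xw$ is then forced to be $(x,u,w)$. Since $ux\in M$, the vertex $xw$ hangs in $\cI'$ only off $uw$, which need not lie in $C$. Meanwhile (ii) is vacuous, because $uw$ is an edge. So the rigid structure of $C$ says nothing, and your heuristic of taking $y^*\in N(u)$ steers you straight into this case. Passing to $M\ni uy$ meets the mirror-image obstruction. Nothing in your argument rules out $ux$ and $uy$ each lying on several critical $2$-paths.

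The missing idea is to choose the critical non-edge so that $ux$ lies on \emph{no other} critical $2$-path, i.e.\ so that $ux$ is a leaf of its component in the full graph $\cI$. The paper gets such a leaf from the hypothesis that equality holds for every maximal matching (Lemma~\ref{lem:all-comps-left-lops}). A component of $\cI$ that is right-lopsided, or balanced with one of its edges placed in $M$, would leave a right-lopsided component of $\cI'$. So by Lemma~\ref{lem:component-sides}, every component of $\cI$ containing a critical non-edge is a tree whose leaves all lie on the left.

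With $ux$ such a leaf, a critical $xw$ with path $(x,u,w)$ would give $ux$ a second neighbor in $\cI$, which is impossible. Your dichotomy then closes for $M\supseteq\{ux,yw\}$: use (ii) if $w\notin N(u)$, and the switch exit $(w,x)$ from (i) if $w\in N(u)$.

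This completed argument is the paper's proof of Lemma~\ref{lem:no-big-comps} with $x$ and $y$ interchanged. The paper additionally uses that a matching contains at most one edge from each component of $\cI$. It needs this to see that deleting $M$ leaves exactly a balanced component, and to handle the analogue of (ii). Your right-leaf count does both jobs directly.
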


(This implies Lemma \ref{lem:equality} because by Lemma \ref{lem:triangle}, any edge in a triangle is also on some critical $2$-path; so if $G$ has no critical non-edges (and therefore no critical $2$-paths), it cannot have any triangles.)

We continue to use the notation from Section \ref{sec:bound}. (Note that several objects defined in Section \ref{sec:bound}, such as $\cI'$ and the notion of a switch exit, implicitly depend on $M$.) We will use the properties of an equality case described in Corollary \ref{cor:bound} --- we will show that if $G$ contains any critical non-edge and is not a star, then either we can find a maximal matching $M$ for which $\cI'$ has a right-lopsided component, or one where some balanced component of $\cI'$ has a switch exit. 

\begin{lemma} \label{lem:all-comps-left-lops}
    If $G$ is an equality case, then every connected component of $\cI$ is left-lopsided, and for each component $C$ of $\cI$, every matching in $G$ uses at most one left vertex of $C$.
\end{lemma}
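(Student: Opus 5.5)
The plan is to compare each component $C$ of $\cI$ with the pieces it breaks into in $\cI'$ after deleting the matching, and then use the first condition of Corollary \ref{cor:bound} (no right-lopsided components of $\cI'$) for a suitably chosen maximal matching $M$. Throughout, fix a component $C$ of $\cI$ with $a$ left vertices and $b$ right vertices.

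First I will record how $C$ relates to $\cI'$. Fix any maximal matching $M$, and let $k' = \sabs{M \cap C}$ be the number of left vertices of $C$ lying in $M$. The graph $\cI'$ is obtained from $\cI$ by deleting the vertices in $M$. So every component of $\cI'$ lies inside a single component of $\cI$, and the components of $\cI'$ contained in $C$ partition $C \setminus M$. Summing (left minus right) over these pieces gives
\[
\sum_{C' \subseteq C} \bigl(\sabs{C' \cap E'} - \sabs{C' \cap F_{\tsf{c}}}\bigr) = a - k' - b .
\]
Every right vertex of $C$ survives in $\cI'$, possibly as an isolated vertex, and such a vertex is then a right-lopsided component. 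By Corollary \ref{cor:bound}, since $G$ is an equality case, no piece is right-lopsided. Hence each summand is at least $0$, and therefore
\[
k' \leq a - b .
\]
Combined with Lemma \ref{lem:component-sides}, which gives $a - b \leq 1$, this yields $k' \leq 1$ for every maximal matching $M$.

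Both claims now follow by choosing $M$ appropriately.
\begin{itemize}
\item \emph{Second claim.} Let $N$ be any matching of $G$ that uses $k$ left vertices of $C$. Extend $N$ greedily to a maximal matching $M \supseteq N$. Then $k \leq k' \leq 1$.
\item \emph{First claim.} Note that $C$ has at least one left vertex $e$. This holds because $C$ is either a single left vertex, or contains a right vertex, and every right vertex of $\cI$ has degree exactly two. Extend the one-edge matching $\{e\}$ to a maximal matching $M$. Then $k' \geq 1$, so $1 \leq k' \leq a - b$. Hence $a > b$, i.e. $C$ is left-lopsided.
\end{itemize}

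The step needing the most care is the accounting in the second paragraph. One must check that components of $\cI'$ really refine those of $\cI$ restricted to $C \setminus M$. One must also check that isolated right vertices (critical non-edges whose critical $2$-path edges both lie in $M$) are counted as right-lopsided components, so that Corollary \ref{cor:bound} applies to them. Once that is in place, the rest is just the choice of maximal matchings extending a prescribed matching.
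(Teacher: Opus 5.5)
Your proof is correct and follows essentially the same route as the paper's. You compare the left-minus-right count of a component $C$ of $\cI$ with the pieces it splits into in $\cI'$, and apply the ``no right-lopsided components'' part of Corollary \ref{cor:bound} to maximal matchings extending a prescribed matching. The only difference is cosmetic: you first derive $k' \leq a - b \leq 1$ uniformly via Lemma \ref{lem:component-sides}, where the paper does casework on whether $C$ is right-lopsided, balanced, or left-lopsided.
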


(Recall that the left vertices of $\cI$, and therefore any component $C$ of $\cI$, are edges in $G$.)

\begin{proof}
    If we track what happens to a component $C$ of $\cI$ when we move to $\cI'$, we delete its left vertices which are in $M$; this may split its remainder into several components of $\cI'$. If $C$ had $a$ vertices on the left and $b$ on the right, and $c$ of its left vertices are in $M$, then the components $C$ splits into have a total of $a - c$ vertices on the left and $b$ on the right. If $G$ is an equality case, then each of those components must be left-lopsided or balanced by Corollary \ref{cor:bound}, so we must have $a - c \geq b$ (for every $M$).

    If $C$ is right-lopsided (so $a < b$), then $a - c < b$ for \emph{every} $M$. If $C$ is balanced (so $a = b$), then we can construct $M$ with $c \geq 1$ by placing one left vertex of $C$ (which is an edge of $G$) into $M$, and greedily extending it to a maximal matching (repeatedly adding edges of $G$ to $M$ until we get stuck); then $a - c < b$ for this choice of $M$. Finally, if $C$ is left-lopsided (so $a = b + 1$) and some matching uses more than one left vertex of $C$, then we can greedily extend this to a maximal matching $M$ (which has $c \geq 2$); so we also have $a - c < b$ for this choice of $M$. 
\end{proof}

\begin{lemma} \label{lem:no-big-comps}
    If $G$ is an equality case and $\cI$ has any left-lopsided component $C$ with at least one vertex on the right, then $G$ is a star $K_{1, n - 1}$. 
\end{lemma}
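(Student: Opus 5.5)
The plan is to first use Lemma \ref{lem:all-comps-left-lops} to pin down the shape of $C$, and then to choose maximal matchings cleverly so that Corollary \ref{cor:bound} forces every vertex onto a single star.

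\textbf{Step 1: the edges of $C$ form a star.} Since $G$ is an equality case, Lemma \ref{lem:all-comps-left-lops} says every matching of $G$ uses at most one left vertex of $C$. So the edges of $G$ lying in $C$ pairwise intersect, and such a family is either a star or a triangle. Because $C$ has a right vertex, it contains a critical non-edge $xy$ with critical $2$-path $(x,u,y)$. By Lemma \ref{lem:component-sides}, $C$ has no right leaves, so $ux$ and $uy$ both lie in $C$. These two edges cannot extend to a triangle, since $xy$ is a non-edge. Hence every edge of $G$ in $C$ contains $u$.

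Consequently $C$ can be described by a set $S \subseteq N(u)$ and a tree $T$ on $S$. The left vertices of $C$ are the edges $ua$ for $a \in S$. The right vertices are the edges $ab$ of $T$, and each such $ab$ is a critical non-edge whose unique common neighbor is $u$. By Lemma \ref{lem:component-sides} this is exactly a tree with all leaves on the left, and $|S| \geq 2$.

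\textbf{Step 2: show $G$ is the star centred at $u$.} It suffices to prove two things: every vertex outside $S \cup \{u\}$ leads to a contradiction, and there are no edges inside $S$ or from $S$ to other vertices apart from $u$. Each time, I would build a maximal matching $M$ adapted to a hypothetical bad edge and show that $\cI'$ either has a right-lopsided component or a balanced component with a switch exit, contradicting Corollary \ref{cor:bound}.

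\begin{itemize}
\item \emph{A vertex $a \in S$ has a neighbour $w \neq u$.} Take $b$ with $ab \in T$, put $aw$ and $ub$ into $M$, and extend greedily. In $\cI'$ the vertex $ub$ is removed while $ua$ survives. Since $a^* = w$, the pair $(w,b)$ would switch to $ab$, which lies in the component $D$ of $\cI'$ containing $ab$. If $wb$ is an edge, then $(a,w,b)$ is a second $2$-path between $a$ and $b$, contradicting criticality of $ab$; so $wb$ is a non-edge. I would then argue, in the spirit of Lemma \ref{lem:auto-noncrit} and Case 2b of Lemma \ref{lem:big-component-exits}, that $wb$ is noncritical, so $(w,b)$ is a switch exit for $D$. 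Finally I would count vertices of $D$, using that $C$ lost exactly one left vertex and that other components of $\cI$ are left-lopsided by Lemma \ref{lem:all-comps-left-lops}, to see that $D$ is not left-lopsided. That contradicts Corollary \ref{cor:bound}.
\item \emph{A vertex $z \notin S \cup \{u\}$.} Here I would use diameter $2$ to get a path from $z$ to $u$ or to some $a \in S$. Either that path reduces to the previous case, or $z \sim u$ forces $uz$ into some critical path. Lemma \ref{lem:triangle} is available whenever a triangle appears, and in that situation I would show that the resulting critical non-edge joins $C$, so that $z \in S$.
\end{itemize}

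\textbf{Main obstacle.} I expect the hardest part to be the counting and switch-exit verification in Step 2. One must track exactly how the component $C$ splits once $ub$ is matched. One must also check that the non-edge produced (such as $wb$) is genuinely noncritical; if $wb$ were critical, its critical path would attach to $D$ and change the count. The first thing I would try is the leaf argument from Lemma \ref{lem:big-component-exits}: choose $b$ to be a leaf of $T$, so that matching $ub$ isolates $ab$ as a right leaf of $D$. Then $D$ cannot be left-lopsided by Lemma \ref{lem:component-sides}, and it remains only to exhibit one switch exit of $D$.
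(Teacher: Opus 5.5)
\textbf{There is a genuine gap, in the switch-exit step of your first bullet in Step 2.}

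Your Step 1 is correct, and it is a nice global repackaging of Lemma~\ref{lem:all-comps-left-lops}. Pairwise-intersecting edges that include $ux,uy$ with $xy\notin E$ must all pass through $u$. So $C$ is the subdivision of a tree $T$ on some $S\subseteq N(u)$.

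In Step 2, the switch exit is only promised (``I would argue\ldots''), and the arguments you point to do not supply it.
\begin{itemize}
\item Lemma~\ref{lem:auto-noncrit} is inapplicable: $D$ is balanced, and $ab$ is a non-edge.
\item Your claim that a critical path of $wb$ ``would attach to $D$'' is false. If $w\notin N(u)$, a critical path $(w,z,b)$ has $z\neq u$, so by your own Step 1 neither of its edges lies in $C$. If $w\in N(u)$, the path is $(w,u,b)$. That makes $wb$ a second $T$-edge at $b$, lying in a different component of $C-ub$ from $ab$, so the count of $D$ is unaffected.
\end{itemize}
Hence noncriticality of $wb$ is forced only when $w\in N(u)$ \emph{and} $b$ is a leaf of $T$, because then $ub$ would have two neighbours in $\cI$.

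For an arbitrary $a\in S$, your method genuinely stalls. Suppose $a$ is a leaf of $T$, its $T$-neighbour $b$ has another $T$-neighbour $w$, and $aw\in E$. Take $M\supseteq\{aw,ub\}$. Then the balanced component $\{ua,ab\}$ of $\cI'$ has no switch exit at all. The only candidates are $(b,a)$, $(w,u)$, $(w,b)$ and $(u,a)$, and each is either an edge or a critical non-edge. So trying to prove $N(a)=\{u\}$ for \emph{every} $a\in S$ is both harder and unnecessary.

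\textbf{The missing ideas.}
\begin{enumerate}
\item Restrict to $b$ a leaf of $T$, and let $a$ be its unique $T$-neighbour. Then $C-ub$ is a single balanced component of $\cI'$.
\item If $w\notin N(u)$, use the switch exit $(w,u)$ instead of $(w,b)$. It switches to $ua\in C-ub$. The non-edge $uw$ is noncritical because its only possible critical path $(u,a,w)$ would put $aw$ into $C$, contradicting Step 1.
\item If $w\in N(u)$, use $(w,b)$ together with the leaf argument above.
\end{enumerate}
This shows $N(a)=\{u\}$. You should then finish globally rather than through your second bullet. Diameter $2$ forces every other vertex to be adjacent to $u$. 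The spanning star at $u$ already has diameter $2$, so criticality forbids every further edge.

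With these repairs your argument becomes exactly the paper's proof: its Cases 1, 2 and 3, with your Step 1 replacing the paper's local use of the ``one edge of $C$ per matching'' property in Case 1.
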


\begin{proof}
    By Lemma \ref{lem:component-sides}, $C$ must be a tree all of whose leaves are on the left. Let $uy$ be a leaf of $C$ on the left and let $xy$ be its one neighbor on the right; then $ux$ is the other neighbor of $xy$ on the left (this means $xy$ is a critical non-edge and $(x, u, y)$ is its critical $2$-path). 

    \begin{center}
        \begin{tikzpicture}
            \node [sdot, label = left: {$x$}] (x) at (-1, 0) {};
            \node [sdot, label = right: {$y$}] (y) at (1, 0) {};
            \node [sdot, label = above: {$u$}] (u) at (0, 2) {};
            \draw [gray] (x) -- (u) -- (y);
            \draw [SpringGreen4, dashed] (x) -- (y);
        \end{tikzpicture}
    \end{center}

    We split the rest of the proof into three cases, one of which must always occur. 

    \case{1}{$x$ has a neighbor $v \neq u$ which is not a neighbor of $u$} In this case, we have the following picture.  

    \begin{center}
        \begin{tikzpicture}
            \node [sdot, label = left: {$x$}] (x) at (-1, 0) {};
            \node [sdot, label = right: {$y$}] (y) at (1, 0) {};
            \node [sdot, label = above: {$u$}] (u) at (0, 2) {};
            \node [sdot, label = below: {$v$}] (v) at (0, -2) {};
            \draw [gray] (x) -- (u) -- (y);
            \draw [SpringGreen4, dashed] (x) -- (y);
            \draw [gray] (x) -- (v);
            \draw [SpringGreen4, dashed] (u) -- (v);
        \end{tikzpicture}
    \end{center}

    If the non-edge $uv$ is critical, then in $\cI$ we have the adjacencies \[uy \sim xy \sim ux \sim uv \sim xv\] (since $ux$ and $uy$ are on the critical $2$-path for $xy$, while $ux$ and $xv$ are on the critical $2$-path for $uv$). This means $uy$ and $xv$ are both in $C$; this contradicts Lemma \ref{lem:all-comps-left-lops}, as the matching $\{uy, xv\}$ in $G$ uses two edges belonging to $C$. 

    So $uv$ must be noncritical. Then we can construct $M$ by placing $uy$ and $xv$ in it and greedily extending this to a maximal matching. By Lemma \ref{lem:all-comps-left-lops}, no other vertices of $C$ (other than $uy$) can land in $M$; so $C$ turns into a balanced component $C'$ of $\cI'$ obtained by just deleting $uy$ (which was a leaf in $C$). And $(v, u)$ is a switch exit for $C'$, as it switches to $xu$ (which is in $C'$). So we have constructed $M$ for which $\cI'$ has a balanced component with a switch exit; this contradicts Corollary \ref{cor:bound} (which implies that if $G$ is an equality case, no balanced component can have a switch exit). 

    \begin{center}
        \begin{tikzpicture}
            \node [sdot, label = left: {$x$}] (x) at (-1, 0) {};
            \node [sdot, label = right: {$y = u^*$}] (y) at (1, 0) {};
            \node [sdot, label = above: {$u$}] (u) at (0, 2) {};
            \node [sdot, label = below: {$v = x^*$}] (v) at (0, -2) {};
            \draw [MediumPurple3] (x) -- (u);
            \draw [SkyBlue3] (u) -- (y);
            \draw [SpringGreen4, dashed] (x) -- (y);
            \draw [SkyBlue3] (x) -- (v);
            \draw [SpringGreen4, dashed] (u) -- (v);
        \end{tikzpicture}
    \end{center}

    \case{2}{$x$ and $u$ have a common neighbor $v$} Then because $xy$ is critical, $yv$ must be a non-edge (or else $(x, v, y)$ would be a second $2$-path between $x$ and $y$). 

    \begin{center}
        \begin{tikzpicture}
            \node [sdot, label = left: {$x$}] (x) at (-1, 0) {};
            \node [sdot, label = right: {$y$}] (y) at (1, 0) {};
            \node [sdot, label = above: {$u$}] (u) at (0, 2) {};
            \node [sdot, label = below: {$v$}] (v) at (0, -2) {};
            \draw [gray] (x) -- (u) -- (y);
            \draw [SpringGreen4, dashed] (x) -- (y);
            \draw [gray] (x) -- (v);
            \draw [gray] (u) -- (v);
            \draw [SpringGreen4, dashed] (v) -- (y);
        \end{tikzpicture}
    \end{center}

    If $yv$ is critical, then $\cI$ has the adjacencies \[ux \sim xy \sim uy \sim vy \sim uv,\] contradicting the assumption that $uy$ is a leaf in $C$. 

    So $yv$ must be noncritical. Then we can construct $M$ by placing $xv$ and $uy$ in it and greedily extending this to a maximal matching. Again $uy$ must be the only vertex of $C$ placed into $M$ by Lemma \ref{lem:all-comps-left-lops}, so $C$ turns into a balanced component $C'$ of $\cI'$ obtained by just deleting $uy$. And $(v, y)$ is a switch exit for $C'$ because it switches to $xy$, which is in $C'$. So we have again obtained a balanced component with a switch exit, which contradicts Corollary \ref{cor:bound}. 

    \begin{center}
        \begin{tikzpicture}
            \node [sdot, label = left: {$x$}] (x) at (-1, 0) {};
            \node [sdot, label = right: {$y$}] (y) at (1, 0) {};
            \node [sdot, label = above: {$u$}] (u) at (0, 2) {};
            \node [sdot, label = below: {$v$}] (v) at (0, -2) {};
            \draw [MediumPurple3] (x) -- (u);
            \draw [SpringGreen4, dashed] (x) -- (y);
            \draw [SkyBlue3] (x) -- (v);
            \draw [SkyBlue3] (u) -- (y);
            \draw [MediumPurple3] (u) -- (v);
            \draw [SpringGreen4, dashed] (v) -- (y);
        \end{tikzpicture}
    \end{center}

    \case{3}{$x$ has no neighbors other than $u$} Then because $G$ has diameter $2$, every other vertex must be adjacent to $u$ (in order to have distance $2$ to $x$); this means $G$ contains a star centered at $u$ using all the vertices of $G$. But such a star already has diameter $2$; since $G$ is diameter-$2$-critical, this means $G$ cannot contain any other edges (as such an edge could be deleted). 
\end{proof}

Combining Lemmas \ref{lem:all-comps-left-lops} and \ref{lem:no-big-comps}, we get that if $G$ is an equality case, then either $G$ is a star, or $\cI$ cannot have any vertices on the right (if $\cI$ had a vertex on the right, it would be contained in some connected component, which would have to be left-lopsided by Lemma \ref{lem:all-comps-left-lops}), meaning that $G$ cannot have any critical non-edges; this completes the proof of Lemma \ref{lem:strong-equality}, and therefore also Lemma \ref{lem:equality}. 

\newpage

\bibliography{refs}{}
\bibliographystyle{plain}

\end{document}